\theoremstyle{definition}
\newtheorem{Th}{Theorem}[section]
\theoremstyle{definition}
\theoremstyle{definition}
\newtheorem{Rem}[Th]{Remark}
\theoremstyle{definition}
\newtheorem{Prop}[Th]{Proposition}
\theoremstyle{definition}
\newtheorem{Cor}[Th]{Corollary}
\theoremstyle{definition}
\newtheorem{Lem}[Th]{Lemma}
\theoremstyle{definition}
\theoremstyle{definition}
\theoremstyle{definition}
\theoremstyle{definition}
\newcommand{\relmiddle}[1]{\mathrel{}#1\mathrel{}}
\newcommand{\Rplus}{\mathbb{R}_+} 
\newcommand{\torus}[1]{T_{#1}} 
\newcommand{\Fix}{\operatorname{Fix}} 
\newcommand{\eFix}{\operatorname{eFix}} 
\newcommand{\FPC}[1]{{\bf #1}} 
\newcommand{\cd}[1]{\mathrm{cd}(#1)} 
\newcommand{\cdGamma}[1]{\mathrm{cd}_\Gamma(#1)} 
\newcommand{\ind}[2]{\mathrm{ind}(#1,#2)} 
\newcommand{\freeloop}[1]{{[[#1]]}} 
\newcommand{\norm}[1]{\left\|#1\right\|} 
\newcommand{\Growth}[1]{\operatorname{\underset{#1\to\infty}{\operatorname{Growth}}}} 
\newcommand{\mc}{\beta} 
\newcommand{\representative}{f_\mc} 
\newcommand{\X}{D^2} 
\newcommand{\Punc}{P_n} 
\newcommand{\Sn}{D_n} 
\newcommand{\C}{\mathcal{C}} 
\newcommand{\PureConf}{F_{n,m}(\X)} 
\newcommand{\Conf}{\C_{n,m}(\X)} 
\newcommand{\IntertwiningConf}{IT_{n,m}(\X)} 
\newcommand{\Braid}{{\bf B}_{n,m}(\X)} 
\newcommand{\IntertwiningBraidn}[1]{{\bf E}_{n,#1}(\X)} 
\newcommand{\IntertwiningBraid}{\IntertwiningBraidn{m}} 
\newcommand{\AlgBraid}{B_{n,m}} 
\newcommand{\AlgIntertwiningBraidn}[1]{E_{n,#1}} 
\newcommand{\AlgIntertwiningBraid}{\AlgIntertwiningBraidn{m}} 
\newcommand{\BraidR}{R_B} 
\newcommand{\IntertwiningR}{R} 
\newcommand{\ConfCov}{p} 
\newcommand{\Confmap}[1]{\widehat{#1}} 
\newcommand{\Y}{Y_n} 
\newcommand{\cmc}{\overline{\representative}} 
\newcommand{\interior}{\operatorname{int}} 
\newcommand{\lift}[1]{\widetilde{#1}} 
\newcommand{\Conflift}[1]{\lift{#1}} 
\newcommand{\Hm}[1]{H_m(#1)} 
\newcommand{\IP}[2]{\left\langle#1,#2\right\rangle} 
\newcommand{\homology}[1]{\left[#1\right]} 
\newcommand{\Euler}{n-1} 
\newcommand{\torusfund}{\Gamma_{\mc,m}} 
\newcommand{\trace}{\tr_{\torusfund}\rep(\mc)} 
\newcommand{\tracek}[1]{\tr_{\Gamma_{\mc^{#1},m}}\rep(\mc^{#1})} 
\newcommand{\emb}{g} 
\newcommand{\dil}{\lambda} 
\newcommand{\rhomatrix}{M(n,\mu,\nu)} 
\newcommand{\rep}{\zeta_{n,m}} 
\newcommand{\basis}{\mathcal{E}_{n,m}} 
\newcommand{\Confunivcov}{\Conflift{\C}_{n,m}(\X)} 
\newcommand{\LKB}{\rho_{\mathit{LKB}}} 
\newcommand{\Burau}{\rho_B} 
\newcommand{\Aut}{\operatorname{Aut}} 
\newcommand{\GL}{\operatorname{GL}} 
\newcommand{\id}{\mathrm{id}} 
\newcommand{\Ker}{\operatorname{Ker}} 
\newcommand{\sgn}{\operatorname{sgn}} 
\newcommand{\tr}{\operatorname{tr}} 
\newcommand{\st}{\mathrm{\:s.t.\:}} 
\newcommand{\Gauss}[1]{\lfloor#1\rfloor} 
\newcommand{\sr}{\operatorname{sr}} 
\title[Fixed point theory and dilatation]{A new relationship between the dilatation of pseudo-Anosov braids and fixed point theory}
\author{Yumehito Kawashima}
\begin{document}

\begin{abstract}



A relation between the dilatation of pseudo-Anosov braids and fixed point theory was studied by Ivanov. In this paper we reveal a new relationship between
the above two subjects by showing a formula 
for the dilatation of pseudo-Anosov braids by means of the representations of braid groups due to B. Jiang and H. Zheng. 
\end{abstract}

\maketitle

\section{Introduction}\label{sec:introduction}






The purpose of this paper is to reveal a new relationship between the dilatation of pseudo-Anosov braids and fixed point theory. For this purpose we obtain a new formula to determine the dilatation of pseudo-Anosov braids from the representation $\rep$ due to Jiang and Zheng \cite{forcing_relation}. 

Let us recall the notion of pseudo-Anosov braids. Let $\Sigma_g$ be a closed surface of genus $g$ and $\Punc$ be an $n$-point subset of $\Sigma_g$. We denote by $\Sigma_{g,n}$ the subset of $\Sigma_g$ deleting $\Punc$. We consider the case when $\Sigma_{g,n}$ has negative Euler characteristic. 
Let $f$ be a homeomorphism of $\Sigma_g$ fixing $\Punc$ setwise. We recall that $f$ is \emph{periodic} if $f^k$ equals identity for some $k>0$, and it is \emph{reducible} if there exists an $f$-invariant closed $1$-manifold $J\subset\Sigma_{g,n}$ whose complementary components in $\Sigma_{g,n}$ have negative Euler characteristic or else are M\"obius bands. We refer to $J$ as a \emph{reduction} of $f$. Finally, $f$ is \emph{pseudo-Anosov} if there exists a number $\lambda>1$ and a pair $\mathcal{F}^s$, $\mathcal{F}^u$ of transverse measured foliations with singularities modelled on $k$-prongs, $k = 1,2,\dots$ in Figure \ref{fig:k-prong} such that the equalities $f(\mathcal{F}^s) = (1\slash\lambda)\mathcal{F}^s$ and $f(\mathcal{F}^u)=\lambda\mathcal{F}^u$ hold. Furthermore, the one-prong singularities of these foliations are allowed to occur only at the punctures. 
For an isotopy class $\varphi$ of homeomorphisms of $\Sigma_g$, $\varphi$ is \emph{periodic} if there exists a periodic element in $\varphi$. Similarly, $\varphi$ is \emph{reducible} if there exists a reducible element in $\varphi$ and $\varphi$ is \emph{pseudo-Anosov} if there exists a pseudo-Anosov element in $\varphi$. 

\begin{figure}[tbp]
\begin{center}
\includegraphics[width=\textwidth]{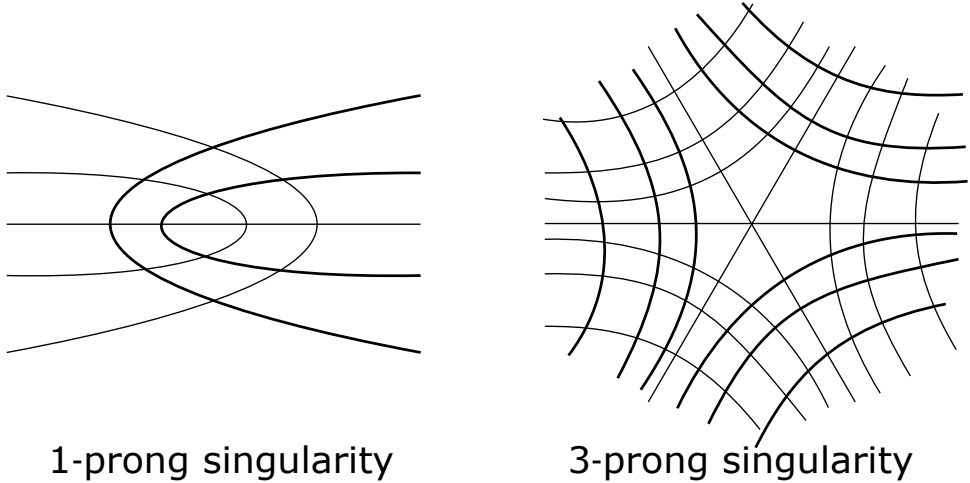}
\end{center}
\caption{local chart around the singularities}
\label{fig:k-prong}
\end{figure}

In \cite{NT_classification}, Thurston classified the isotopy classes of homeomorphisms on $\Sigma_g$ fixing $P_n$ into periodic, reducible and pseudo-Anosov. Since we can regard the braid group $B_n$ on $n$ strands as the mapping class group of disk with $n$ punctures, every element of $B_n$ is also classified into periodic, reducible and pseudo-Anosov types. In \cite{train_track}, Bestvina and Handel obtained an algorithm which gave the classification for surface homeomorphisms. Using this algorithm, they established a method to calculate the dilatation of a pseudo-Anosov maping class $\varphi$
. 


Dilatations themselves are related to many fields and have been intensively studied by many authors. For example, it is known that the logarithm of the dilatation of pseudo-Anosov maps is the same as the topological entropy of pseudo-Anosov maps, which is an important subject in ergodic theory. 
Also in \cite{I}, Ivanov showed that the logarithm of the asymptotic Nielsen number, which appeared in fixed point theory, coincides with the entropy. 
In this paper, we obtain a new formula to determine the dilatation of pseudo-Anosov braids from the representation $\rep$ due to B. Jiang and H. Zheng \cite{forcing_relation}. 



The \emph{growth rate} of a sequence $\{a_n\}$ of complex numbers is defined by 
\[
\Growth{n}a_n=\max\left\{1,\limsup_{n\to\infty}|a_n|^{1/n}\right\}.
\]
Let us notice that the above growth rate could be infinity. 
When the inequality $\Growth{n}a_n>1$ holds, we say that the sequence \emph{grows exponentially}. 


For any set $S$, $\mathbb{Z}S$ denotes the free abelian group with the specified basis $S$. 
If $x=\sum_{s\in S}k_ss$ is a finite sum, we define the \emph{norm} of $x$ in $\mathbb{Z}S$ by 
\[
\norm{x}=\sum_{s\in S}|k_s|. 
\]
For any matrix $A=(a_{ij})$ with coefficients in $\mathbb{Z}S$, the norm of $A$ is the matrix defined by $\norm{A}=(\norm{a_{ij}})$ when $a_{ij}$ is a finite sum for all $i$ and $j$. 


Let $\Punc$ be a finite subset of $\interior{\X}$ of $n\geq0$ points and we set $\Sn=\X\setminus\Punc$. For integers $n,m\geq0$, we consider three types of \emph{configuration spaces} as follows: The space of $m$-tuples of distinct points in $\Sn$ denoted by 
\[
\PureConf=\{(z_1,\dots,z_m)\in (\Sn)^m\mid z_i\neq z_j\:\mathrm{for\:all}\:i\neq j\}, 
\]
the space of subsets of distinct $m$ elements in $\Sn$ denoted by 
\[
\Conf=\PureConf\slash\mathcal{S}_m
\]
and the space $\IntertwiningConf$ of pairs of disjoint subsets of $n$ distinct elements and $m$ distinct elements in $\X$ denoted by 
\[
\IntertwiningConf={F_{0,n+m}(\X)}\slash\mathcal{S}_n\times\mathcal{S}_m,  
\]
where the symmetric group $\mathcal{S}_m$ acts on $\PureConf$ by permuting components of an $m$-tuple and similarly, the subgroup $\mathcal{S}_n\times\mathcal{S}_m$ of $\mathcal{S}_{n+m}$ acts on ${F_{0,n+m}(\X)}$. We write $\{y_1,\dots,y_m\}$ and $(\{x_1,\dots,x_n\},\{y_1,\dots,y_m\})$ for the elements of $\Conf$ and $\IntertwiningConf$ respectively. 

We choose $m$ distinct points $d_1,\dots,d_m$ in $\partial\X$ and take a base point $c=\{d_1,\dots,d_m\}$ of $\Conf$. Let $b=(\Punc,c)$ be a base point of $\IntertwiningConf$. 
The \emph{
$m$-braid group on $\Sn$} is defined by 
\[
\Braid=\pi_1(\Conf, c) 
\]
and the \emph{intertwining $(n,m)$-braid group on $\X$} is defined by 
\[
\IntertwiningBraid=\pi_1(\IntertwiningConf, b). 
\]


We set 
\[
\basis=\{\mu=(\mu_1,\dots,\mu_{\Euler})\in\mathbb{N}^{n-1}\mid\mu_1+\dots+\mu_{\Euler}=m\}. 
\]
We construct a $\BraidR$-invariant free $\BraidR$-submodule $\mathcal{H}_F$ of a relative homology 
of the universal covering of some configuration space generated by certain $m$-dimensional subspaces corresponding to $\mu\in\basis$. The precise definition is given in Section \ref{sec:representation}. The braid group $B_n$ acts on the homology as the mapping class group and acts on $\IntertwiningR$ by the right multiplication. Tensoring these two actions, $B_n$ acts on $\IntertwiningR\otimes_{\BraidR}\mathcal{H}_F$ and we define a representation $\rep$ by this action. 

Let $\Gamma$ be a group, $\mathbb{Z}\Gamma$ its group ring, $\Gamma_c$ the set of conjugacy classes, $\mathbb{Z}\Gamma_c$ the free Abelian group generated by $\Gamma_c$, and $\pi_\Gamma:\mathbb{Z}\Gamma\rightarrow\mathbb{Z}\Gamma_c$ the natural projection. We suppose $\zeta$ is an endomorphism of a free $\mathbb{Z}\Gamma$-module satisfying $\zeta(v_i)=\sum_{j=1}^ka_{ij}\cdot v_j$ for a basis $\{v_1,\dots,v_k\}$. The \emph{trace} of $\zeta$ is defined as 
\[
\tr_\Gamma\zeta=\pi_\Gamma\left(\sum_{i=1}^ka_{ii}\right)\in\mathbb{Z}\Gamma_c. 
\]

We note that, under the basis $\basis$, all matrix elements of $\rep(\beta)$ belong to $\mathbb{Z}\torusfund$, where $\torusfund$ is the subgroup of $B_{n+m}$ generated by $\mc$ and $\Braid$. Therefore, $\rep(\beta)$ can be naturally regarded as an endomorphism of the free $\mathbb{Z}\torusfund$-module generated by $\basis$. 

Our main result is stated as follows. 


\begin{Th}\label{Th:main_theorem} \emph{For any pseudo-Anosov braid $\mc\in B_n$, we denote by $\dil$ the dilatation of $\mc$. Then we obtain }
\[
\begin{array}{l}
\Growth{k}\norm{\tracek{k}}=\Growth{k}\tr\norm{\rep(\mc^k)}=\dil^m \\
\Growth{m}\norm{\trace}=\dil. 
\end{array}
\]
\end{Th}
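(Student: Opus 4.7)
The plan is to identify the trace $\tracek{k}$ with a Reidemeister / equivariant Lefschetz class of the map that a pseudo-Anosov representative $\cmc$ of $\mc$ induces on the $m$-point configuration/symmetric-product space, and then to extract its exponential growth rate from the topological entropy of that self-map via Ivanov's theorem.

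First I would fix a pseudo-Anosov representative $\cmc$ in the class $\mc$, lift the induced self-map of $\Conf$ to the universal covering $\univcov$, and analyze its action on the $\BraidR$-generators indexed by $\basis$ that span $\mathcal{H}_F$. Since $\rep$ is constructed in Section~\ref{sec:representation} from precisely this action, each diagonal entry $a_{ii}\in\mathbb{Z}\torusfund$ of $\rep(\mc^k)$ records the group element needed to translate the $\mu$-cell back to itself after one pass of the lift. The standard equivariant fixed-point dictionary then identifies $\tracek{k}$ with the Reidemeister trace of $\SPm\cmc^k$ in $\mathbb{Z}(\Gamma_{\mc^k,m})_c$, and because the pseudo-Anosov foliations force each fixed point of $\SPm\cmc^k$ to have index $\pm1$ with a coherent sign inside each Nielsen class, $\norm{\tracek{k}}$ agrees asymptotically with the Nielsen number $N(\SPm\cmc^k)$.

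Next, Ivanov's theorem \cite{I} gives $\limsup_k N(\SPm\cmc^k)^{1/k}=\exp h(\SPm\cmc)$, and the entropy of the induced map on the $m$-th symmetric product of a pseudo-Anosov map with dilatation $\dil$ equals $m\log\dil$ by the product formula (passing to the symmetric quotient is a finite covering and does not change entropy, and deleting the diagonal is negligible). This proves $\Growth{k}\norm{\tracek{k}}=\dil^m$. For the middle equality, the pointwise inequality $\norm{\pi(x)}\leq\norm{x}$ yields $\norm{\tracek{k}}\leq\tr\norm{\rep(\mc^k)}$, while the reverse inequality up to subexponential factors follows from the fact that, under a basis $\basis$ adapted to the unstable foliation, the diagonal entries of $\rep(\mc^k)$ are non-negative combinations of group elements, so no cancellation occurs under projection to conjugacy classes.

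For the second formula, keeping $k=1$ fixed and varying $m$, $\norm{\trace}$ is asymptotic to the number of $\cmc$-invariant $m$-subsets of $\Sn$; such subsets are disjoint unions of periodic orbits of $\cmc$ whose lengths sum to $m$, so their generating function is $\prod_k(1+t^k)^{O_k}$, where $O_k$ counts primitive length-$k$ orbits. For pseudo-Anosov $\cmc$ one has $O_k\sim\dil^k/k$, so $\sum_k O_k t^k$ has radius of convergence $1/\dil$; the generating function of $\norm{\trace}$ therefore has the same radius of convergence, giving $\Growth{m}\norm{\trace}=\dil$. The main obstacle is the precise identification of $\tracek{k}$ with a Reidemeister trace at the level of the universal cover, together with the verification that the projection to conjugacy classes does not induce exponential cancellation; once these non-cancellation properties of $\basis$ with respect to the pseudo-Anosov foliations are established, Ivanov's theorem and the above orbit-counting argument complete the proof.
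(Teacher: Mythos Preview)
Your overall strategy---identify $\tracek{k}$ with a Reidemeister trace and then read off the growth rate from entropy---matches the paper's, but there is a genuine gap at the step where you invoke Ivanov's theorem for the induced map $\SPm\cmc$ on the configuration (or symmetric-product) space. Ivanov's result \cite{I}, quoted here as Theorem~\ref{Th:entropy_vs_asymp}, is a statement about self-maps of compact \emph{surfaces} with negative Euler characteristic; the $m$-point configuration space is a $2m$-manifold and no form of the equality $N^\infty=\exp h$ is available for it. Your companion claim that every fixed point of $\SPm\cmc^k$ has index $\pm1$ with ``coherent sign inside each Nielsen class'' is likewise unsupported in this generality. The paper avoids both issues. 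For the lower bound (Proposition~\ref{Prop:lower_bound}) it does not count Nielsen classes of $\Confmap{\emb}^k$ at all; it keeps only the single term $(n_1,\dots,n_m)=(0,\dots,0,1)$ in the orbit decomposition, thereby reducing to primitive periodic orbits of the \emph{surface} map $g$, where Ivanov applies. For the upper bound (Proposition~\ref{Prop:upper_bound_for_k}) it bypasses Nielsen theory entirely: each diagonal entry of $\norm{\rep(\mc^k)}$ is bounded by a geometric intersection number of forks with $\Confmap{\emb}^k$-images of noodles, and the growth of these intersection numbers is controlled directly through the open-cover definition of $h(g)$ on the surface (Lemma~\ref{Lem:open_cover}).

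Your treatment of the middle equality has the same defect: the assertion that ``no cancellation occurs under projection to conjugacy classes'' for a foliation-adapted basis is not argued, and the paper does not attempt anything of the sort. It simply uses the trivial inequality $\norm{\tracek{k}}\le\tr\norm{\rep(\mc^k)}$ together with the independent upper bound on $\tr\norm{\rep(\mc^k)}$ from Proposition~\ref{Prop:upper_bound_for_k}. Your generating-function argument for $\Growth{m}\norm{\trace}=\dil$ is closer to the paper's Proposition~\ref{Prop:upper_bound_for_m}: once Lemma~\ref{Lem:Key_lemma} identifies $\norm{\trace}$ with $\#\eFix\Confmap{\emb}$ up to a bounded factor, both arguments rest on the surface input $O_k\sim\dil^k/k$. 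The paper carries this out via the explicit combinatorics of the index set $A_m$ and the subexponential bound of Lemma~\ref{Lem:summand_number}, whereas your radius-of-convergence computation for $\prod_k(1+t^k)^{O_k}$ would give the same conclusion more cleanly---but note that it still presupposes the upper bound $D_i^1/i\le C\dil^i$, which in the paper comes from Proposition~\ref{Prop:upper_bound_for_k} for $m=1$, not from any statement about the configuration space.
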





The representations $\rep$ are related to homological representations of braid groups in the following way. For $m=1$, there exists a homomorphism $\Burau:\IntertwiningBraidn{1}\rightarrow\mathbb{Z}$ such that the representation induced by $\Burau$ is equivalent to the reduced Burau representation. Similarly for $m\geq2$, there exists a homomorphism $\LKB:\IntertwiningBraid\rightarrow\mathbb{Z}\oplus\mathbb{Z}$ such that the representation induced by $\LKB$ is equivalent to Lawrence-Krammer-Bigelow representation. The Lawrence-Krammer-Bigelow representations of the braid groups were studied by Lawrence \cite{Law90} in relation with Hecke algebra representations of the braid groups. In \cite{geometric_original}, \cite{algebraic_original4} and \cite{algebraic_original}, Bigelow and Krammer showed the faithfulness of the Lawrence-Krammer-Bigelow representation independently. 

In \cite{Fri86}, Fried proved that the entropy of pseudo-Anosov braids is bounded below by the logarithm of the spectral radius of the Burau matrix $B(t)$ of pseudo-Anosov braids after substituting a complex number of modulus 1 in place of $t$. 
In \cite{Kol89}, Kolev proved the same estimation directly with different methods. The estimate will be called the \emph{Burau estimate}. In \cite{Burau_estimate}, Band and Boyland showed that the spectral radius of the Burau matrix $B(t)$ of pseudo-Anosov braids after substituting the root of unity in place of $t$ is the dilatation itself of pseudo-Anosov braids only if $t=-1$. Furthermore, Band and Boyland showed that the spectral radius of $B(-1)$ is the dilatation of pseudo-Anosov braids if and only if the invariant foliations for pseudo-Anosov maps in the classes of pseudo-Anosov braids have odd order singularities at all punctures and all interior singularities are even order. 

In \cite{homological_estimate}, Koberda proved that the square of the dilatation of pseudo-Anosov braids is bounded below by the spectral radius of Lawrence-Krammer-Bigelow representation $\mathit{LKB}(q,t)$ of pseudo-Anosov braids after substituting complex numbers of modulus 1 in place of $q$ and $t$. In this paper we recover the following result of \cite{Fri86}, \cite{Kol89} and \cite{homological_estimate}. 

\begin{Th}\label{Th:recover}(Fried \cite{Fri86}, Kolev \cite{Kol89} and Koberda \cite{homological_estimate}) \emph{For a pseudo-Anosov braid $\mc$, the dilatation of $\mc$ is equal to or greater than the spectral radius of the Burau matrix $B(t)$ of $\mc$ after substituting a complex number of modulus 1 in place of $t$ and the $m$-th power of the dilatation of $\mc$ is equal to or greater than the spectral radius of the Lawrence-Krammer-Bigelow matrix $\mathit{LKB}_m(q,t)$ of $\mc$ after substituting complex numbers of modulus 1 in place of $q$ and $t$. }
\end{Th}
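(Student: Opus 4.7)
The plan is to derive Theorem \ref{Th:recover} as a direct corollary of Theorem \ref{Th:main_theorem}. Since the excerpt records that the Burau matrix $B(t)$ and the Lawrence--Krammer--Bigelow matrix $\mathit{LKB}_m(q,t)$ arise from $\rep(\mc)$ by specializing the $\mathbb{Z}\torusfund$-coefficients via $\Burau\colon\IntertwiningBraidn{1}\to\mathbb{Z}$ (with $m=1$) respectively $\LKB\colon\IntertwiningBraid\to\mathbb{Z}\oplus\mathbb{Z}$ (with $m\ge 2$), the strategy is (i) to bound the specialized trace by $\tr\norm{\rep(\mc^{k})}$ and (ii) to upgrade the trace bound to a spectral-radius bound.

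For (i), the key point is that when the specialization parameters have modulus $1$, every group element $s\in\torusfund$ is sent to a complex number of modulus $1$. Consequently, for any $x=\sum_{s}k_{s}s\in\mathbb{Z}\torusfund$ the specialized complex value has modulus at most $\norm{x}$. Applying this to each diagonal entry of $\rep(\mc^{k})$ and summing gives
\[
\bigl|\tr B(t)^{k}\bigr|,\ \bigl|\tr \mathit{LKB}_m(q,t)^{k}\bigr|\ \le\ \tr\norm{\rep(\mc^{k})}.
\]
Theorem \ref{Th:main_theorem} then supplies $\Growth{k}\tr\norm{\rep(\mc^{k})}=\dil^{m}$, so $\limsup_{k}|\tr B(t)^{k}|^{1/k}\le\dil$ (using the $m=1$ case) and $\limsup_{k}|\tr\mathit{LKB}_m(q,t)^{k}|^{1/k}\le\dil^{m}$.

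For (ii), I will invoke the classical identity $\rho(A)=\limsup_{k}|\tr A^{k}|^{1/k}$ for any complex square matrix $A$. When $\rho(A)>0$ this follows from isolating the top-modulus eigenvalues: their contribution to $\tr A^{k}$ is a nonzero almost-periodic function of $k$ (nonzero because, by Newton's identities, all top power sums vanishing would force $\rho=0$), hence takes values bounded away from $0$ on a subsequence of positive density. Combining (i) with (ii) yields $\rho(B(t))\le\dil$ and $\rho(\mathit{LKB}_m(q,t))\le\dil^{m}$, proving Theorem \ref{Th:recover}.

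The main obstacle will be step (ii): ruling out destructive cancellation among the top-modulus eigenvalues so that the trace genuinely detects the spectral radius. The almost-periodic/Newton-identities argument above handles this in full generality, so the remaining work is essentially bookkeeping on top of the main theorem and the specialization statements already recalled in the excerpt.
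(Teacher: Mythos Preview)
Your argument is correct and follows the same overall outline as the paper (specialize $\rep$ via $\Burau$ or $\LKB$, compare with the main theorem, extract the spectral radius from trace growth), but it takes a noticeably shorter path. The paper first proves the two-sided identity of Proposition~\ref{Prop:trace_estimate},
\[
\Growth{k}\norm{\tr A^{k}}=\sup_{x_i\in S^{1}}\sr A(x_1,\dots,x_n),
\]
whose nontrivial direction requires the Vandermonde-matrix bound of Lemma~\ref{Lem:polynomial} together with the sine-product identity of Lemma~\ref{Lem:sine}, and then combines this with $\norm{\tr(\rho)_{*}(\rep)(\mc^{k})}\le\norm{\tracek{k}}$ and Theorem~\ref{Th:main_theorem}. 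You observe that for Theorem~\ref{Th:recover} only the inequality $\sr\le\dil^{m}$ is needed, and this follows from the \emph{easy} direction alone: for $|t|=|q|=1$ each group element specializes to a unit-modulus scalar, so $|\tr B(t)^{k}|$ and $|\tr \mathit{LKB}_m(q,t)^{k}|$ are bounded by $\tr\norm{\rep(\mc^{k})}$ via the triangle inequality, and then Theorem~\ref{Th:main_theorem} and $\sr(A)=\limsup_{k}|\tr A^{k}|^{1/k}$ finish the job. What the paper's longer route buys is the full equality in Proposition~\ref{Prop:trace_estimate}, which is of independent interest but unnecessary for the stated corollary. One small remark: the identity $\sr(A)=\limsup_{k}|\tr A^{k}|^{1/k}$ that you invoke in step~(ii) is exactly the step the paper also uses (without comment) at the end of its proof of Proposition~\ref{Prop:trace_estimate}; your almost-periodicity/Newton-identities justification is a welcome addition, since otherwise cancellation among several top-modulus eigenvalues has to be ruled out.
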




This paper is organized as follows. In Section \ref{sec:preliminary} we recall the definition of the topological entropy due to Adler, Konheim and McAndrew \cite{AKM65}. In Section \ref{sec:FPT}, we review asymptotic fixed point theory. We recall asymptotic fixed point theory for compact spaces due to Jiang \cite{Nielsen_theory} and a version of relative Nielsen theory due to Jiang, Zhao and Zheng \cite{stratified} and Jiang and Zheng \cite{forcing_relation}. In Section \ref{sec:trace_formula}, we construct the representation $\rep$ due to Jiang and Zheng \cite{forcing_relation} and state the relation between the trace of $\rep$ and the number of essential fixed points of some good self map. In Section \ref{sec:dilatation_proof} we prove the main theorem using the relation among dilatation, entropy and fixed point theory. In Section \ref{sec:homological_representation} we recover from our main theorem the estimation of the dilatation of pseudo-Anosov braids in \cite{Fri86}, \cite{Kol89} and \cite{homological_estimate} by means of the homological representation. 
\section{preliminaries}\label{sec:preliminary}

\subsection{Topological entropy}\label{sec:entropy}

The most widely used measure for the complexity of a dynamical system is the topological entropy. We refer the readers to \cite{Entropy} for an introductory treatment. We recall basic notions of the topological entropy due to Adler, Konheim and McAndrew \cite{AKM65}. Originally the topological entropy is defined in \cite{AKM65}. We recall \cite{AKM65} for the definition of the topological entropy. For any open cover $\alpha$ of $X$, let $N(\alpha)$ denote the number of sets in a subcover of minimal cardinality. 
For open covers $\alpha$ and $\beta$ of $X$, their join is the open cover consisting of all sets of the form $A\cap B$ with $A\in\alpha$ and $B\in\beta$. Similarly, we can define the join $\bigvee_{i=1}^n\alpha_i$ of any finite collection $\{\alpha_i\}$ of open covers of $X$. For a continuous self map $T$ of $X$, $T^{-1}\alpha$ denotes the open cover consisting of all sets $T^{-1}A$ with $A\in\alpha$. The \emph{entropy} $h(T,\alpha)$ of a map $T$ with respect to a cover $\alpha$ is defined as $\lim_{n\to\infty}\frac1n\!\:\log N\left(\bigvee_{i=0}^{n-1}T^{-i}\alpha\right)$. The \emph{topological entropy} $h(T)$ of a map $T$ is defined as $\sup h(T,\alpha)$, where the supremum is taken over all open covers $\alpha$. 

For a compact surface $X$ with negative Euler characteristic and a pseudo-Anosov homeomorphism $f$ of $X$ with the dilatation $\lambda>1$,  
$h(f)=\log\lambda$ 
is the minimal entropy in the homotopy class of $f$(\cite[p. 194]{FLP}). 

\section{Asymptotic Nielsen theory for stratified maps}\label{sec:FPT}

In \cite{Nielsen_theory}, Jiang studied fixed point theory using mapping torus. In \cite{stratified}, Jiang, Zhao and Zheng studied fixed point theory for some good noncompact spaces. In \cite{forcing_relation}, Jiang and Zheng studied fixed point theory for configuration spaces using the method in \cite{stratified}. In this section we will review some of the relevant materials from \cite{Nielsen_theory}, \cite{forcing_relation} and \cite{stratified} about fixed point theory. 

\subsection{Mapping torus}\label{sec:mapping_torus}

Subsections \ref{sec:mapping_torus} and \ref{sec:periodic} are devoted to recall basic notions of fixed point theory due to \cite{Nielsen_theory}. In \cite{Nielsen_theory}, Jiang studied fixed points by using mapping torus. 
Let $X$ be a topological space and $f:X\rightarrow X$ be a continuous self map. 
The \emph{mapping torus} $\torus{f}$ of $f$ is the space obtained from $X\times\Rplus$ by identifying $(x,s+1)$ with $(f(x),s)$ for any element $x\in X$ and $s\in\Rplus$, where $\Rplus$ stands for the real interval $[0,\infty)$. On $\torus{f}$ there exists the natural semi-flow 
\[
\varphi:\torus{f}\times\Rplus\rightarrow\torus{f},\:\varphi((x,s),t)=(x,s+t)\:\mathrm{for\:all}\:t\geq0. 
\]
A point $x$ of $X$ and a positive number $\tau>0$ determine the \emph{time-$\tau$ orbit curve} $\varphi_{(x,\tau)}=\{\varphi_t(x,0)\}_{0\leq t\leq\tau}$ in $\torus{f}$. We may identify $X$ with the cross-section $X\times\{0\}\subset\torus{f}$, then the map $f:X\rightarrow X$ is just the return map of the semi-flow $\varphi$. 

We take the base point $v$ of $X$ as the base point of $\torus{f}$. We define $\Gamma$ to be the fundamental group $\pi_1(\torus{f},v)$ of $\torus{f}$ and let $\Gamma_c$ be the set of conjugacy classes of $\Gamma$. Then $\Gamma_c$ is independent of the base point of $\torus{f}$ and can be regarded as the set of free homotopy classes of closed curves in $\torus{f}$. By the van Kampen Theorem, $\Gamma$ is obtained from $G$ by adding a new generator $z$ represented by the loop $\varphi_{(v,1)}w^{-1}$, and the relations $z^{-1}gz=f_G(g)$ for all $g\in G$:
\[
\Gamma=\langle G,z\mid gz=zf_G(g)\:\mathrm{for\:all\:}g\in G\rangle. 
\]


We note that $x$ is a fixed point of $f$ if and only if its time-$1$ orbit curve is closed on the mapping torus $\torus{f}$. For fixed points $x$ and $y$ of $f$, we define $x$ and $y$ to be in the same \emph{fixed point class} if and only if their time-$1$ orbit curves are freely homotopic in $\torus{f}$. Therefore every fixed point class $\FPC{F}$ gives rise to a conjugacy class $\cd{\FPC{F}}$ in $\Gamma_c$, called the \emph{coordinate} of $\FPC{F}$. 
A fixed point class $\FPC{F}$ is called \emph{essential} if its index $\ind{f}{\FPC{F}}$ is nonzero. 

\begin{Rem}\label{Rem:loop} \emph{We take an arbitrary path $c$ from $v$ to a fixed point $x$. In the light of the continuous map $H:I\times I\rightarrow\torus{f}$ defined by $H(s,t)=(c(t),s)$, $\varphi_{(x,1)}$ is homotopic to the loop $c^{-1}\varphi_{(v,1)}f(c)=c^{-1}zwf(c)$ and we obtain }
\[
\cd{x}=\freeloop{zwf(c)c^{-1}}, 
\]
where $\freeloop{\gamma}$ is a free homotopy class obtained by $\gamma$. 
\end{Rem}


Given a nontrivial $n$-strand braid $\mc$, there exists a connecting isotopy $\{h_t:\X\rightarrow\X\}_{0\leq t\leq1}$ from $\id$ such that the curves $\{h_t(\Punc)\}_{0\leq t\leq1}$ represent the braid $\mc$. We set $\representative=h_1$. The map $\representative$ induces a map $\Confmap{\representative}:\Conf\rightarrow\Conf$ given by 
\[
\Confmap{\representative}(\{x_1,\dots,x_m\})=\{\representative(x_1),\dots,\representative(x_m)\}. 
\]
In \cite{forcing_relation}, 
Jiang and Zheng 
showed that 
the fundamental group $\torusfund$ of $\torus{\Confmap{\representative}}$ is isomorphic to the subgroup in $B_{n+m}$ generated by $\mc$ and $\Braid$. 
\subsection{Periodic orbit classes}\label{sec:periodic}

In \cite{Nielsen_theory}, Jiang studied the periodic orbit of $f$, i.e. the fixed points of the iterates of $f$. 

The \emph{periodic point set} of $f$ is the set of points $(x,n)$ in $X\times\mathbb{N}$ satisfying $x=f^n(x)$ and is denoted by $\mathrm{PP}f$. An \emph{$n$-point} of $f$ is a fixed point $x$ of $f^n$. For an $n$-point $x$ of $f$, an \emph{$n$-orbit} of $f$ at $x$ is the $f$-orbit $\{x,\dots,f^{n-1}(x)\}$ in $X$. 
An $n$-orbit of $f$ at $x$ is a \emph{primary} $n$-orbit if $n$ is the least period of the periodic point $x$. 

An \emph{$n$-point class} of $f$ is a fixed point class $\FPC{F}^n$ of $f^n$. 
Two points $x$ and $x^\prime$ in $\Fix f^n$ are said to be in the same \emph{
$n$-orbit class
}
 of $f$ if and only if there exist natural numbers $i$ and $j$ such that $f^i(x)$ and $f^j(x^\prime)$ are in the same $n$-point class of $f$. The set $\Fix f^n$ splits into a disjoint union of $n$-orbit classes. On the mapping torus $\torus{f}$, we observe that $(x,n)$ is in the periodic point set of $f$ if and only if the time-$n$ orbit curve $\varphi_{(x,n)}$ is closed. The free homotopy class $\freeloop{\varphi_{(x,n)}}\in\Gamma_c$ of the closed curve $\varphi_{(x,n)}$ is called the \emph{$\Gamma$-coordinate} of $(x,n)$ and is denoted by $\cdGamma{x,n}$. 
Every $n$-orbit class $\FPC{O}^n$ gives rise to a conjugacy class $\cdGamma{\FPC{O}^n}$ in $\Gamma_c$, called the \emph{$\Gamma$-coordinate} of $\FPC{O}^n$. 

An important notion in the Nielsen theory for periodic orbits is the notion of reducibility. Suppose $m$ is a divisor of $n$ and $m$ is less than $n$. 
An $n$-orbit class $\FPC{O}^n$ is \emph{reducible to period $m$} if $\cdGamma{\FPC{O}^n}$ has an $(n/m)$-th root and is \emph{irreducible} if $\cdGamma{\FPC{O}^n}$ has no nontrivial root. 


An $n$-orbit class $\FPC{O}^n$ is called \emph{essential} if its index $\ind{\FPC{O}^n}{f^n}$ is nonzero. For each natural number $n$, the generalized Lefschetz number with respect to $\Gamma$ is defined as 
\[
L_\Gamma(f^n)=\sum_{\FPC{O}^n}\ind{\FPC{O}^n}{f^n}\cdot\cdGamma{\FPC{O}^n}\in\mathbb{Z}\Gamma_c, 
\]
where the summation is taken over all essential $n$-orbit classes $\FPC{O}^n$ of $f$. 
The \emph{Nielsen number of $n$-orbits} $N_\Gamma(f^n)$ is the number of nonzero terms in $L_\Gamma(f^n)$ and the indices of the essential fixed point classes appear as the coefficients in
$L_\Gamma(f^n)$. Clearly it is a lower bound for the number of $n$-orbits of $f$. The \emph{Nielsen number of irreducible $n$-orbits} $NI_\Gamma(f^n)$ is the number of nonzero primary terms in $L_\Gamma(f^n)$. It is the number of irreducible essential $n$-orbit classes. It is a lower bound for the number of primary $n$-orbits of $f$. 
These are homotopy invariants. 
\subsection{Asymptotic Nielsen theory
}\label{sec:Asymptotic}


In \cite{Nielsen_theory} Jiang defines the \emph{asymptotic Nielsen number} of $f$ to be the growth rate of the Nielsen numbers 
\[
N^\infty(f)=\Growth{n}N_\Gamma(f^n), 
\]
the \emph{asymptotic irreducible Nielsen number} of $f$ to be the growth rate of the Nielsen numbers of irreducible orbits 
\[
NI^\infty(f)=\Growth{n}N_\Gamma(f^n) 
\]
and the \emph{asymptotic absolute Lefschetz number} of $f$ to be the growth rate of the norm of generalized Lefschetz numbers 
\[
L^\infty(f)=\Growth{n}\norm{L_\Gamma(f^n)}. 
\]
In \cite{Nielsen_theory} all these asymptotic numbers are shown to enjoy the homotopy invariance. 

\begin{Rem}\label{rem:PBI}
Since the inequality $NI_\Gamma(f)\leq N_\Gamma(f)\leq\norm{L_\Gamma(f)}$ holds, we obtain $NI^\infty(f)\leq N^\infty(f)\leq L^\infty(f)$. 
In \cite{Nielsen_theory}, Jiang showed that a sufficient condition for the equality $NI^\infty(f)=N^\infty(f)$ is that $f$ satisfies the following Property of Essential Irreducibility: The number $E_n$ of essentially irreducible $n$-point classes that are reducible is uniformly bounded in $n$. Also in \cite{Nielsen_theory}, Jiang showed that a sufficient condition for the equality $N^\infty(f)
=L^\infty(f)$ is that $f$ satisfies the following Property of Bounded Index: The maximum absolute value $B_n$ of the indices of $n$-point classes $\FPC{F}_m^n$ is uniformly bounded in $n$. These conditions are not strong
. For example, every homeomorphism of $\Sn$ satisfies the Property of Essential Irreducibility and the Property of Bounded Index. 
\end{Rem}

In \cite{I}, Ivanov showed that the logarithm of the asymptotic Nielsen number $N^\infty(f)$ of a self map $f$ coincides with the entropy of a self map $f$. 

\begin{Th}\label{Th:entropy_vs_asymp}(Ivanov \cite{I}) \emph{Let $X$ be a compact surface with negative Euler characteristic and $f$ be a self map of $X$. Then the entropy of $f$ coincides with $\log N^\infty(f)$. }
\end{Th}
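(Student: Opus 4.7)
The plan is to establish the equality via two inequalities. Since $N^\infty(f)$ is a homotopy invariant (Section~\ref{sec:Asymptotic}) whereas the bare entropy $h(f)$ is only minimized within the isotopy class, I would first pass to a Thurston--Nielsen canonical representative $f_0$ of the isotopy class of $f$ that realizes the infimum of entropies in the class, prove both inequalities for $f_0$, and transport the result back by homotopy invariance.

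For the lower bound $h(f)\geq\log N^\infty(f)$, the strategy is to extract a Bowen-separated set from the Nielsen classes of the iterates. Concretely, pick one representative point from each of the $N_\Gamma(f^n)$ essential $n$-orbit classes. I claim this finite set is $(n,\varepsilon)$-separated for some $\varepsilon>0$ independent of $n$: if two representatives $x,y$ stayed within distance $\varepsilon$ along the first $n$ iterates of $f$, then their time-$n$ orbit curves in $\torus{f}$ could be joined by a homotopy through an $\varepsilon$-tube, forcing the $\Gamma$-coordinates $\cdGamma{x,n}$ and $\cdGamma{y,n}$ to coincide and contradicting the distinctness of their Nielsen classes. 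Bowen's formula applied to this separated set then yields $h(f)\geq\limsup_{n\to\infty}\tfrac1n\log N_\Gamma(f^n)=\log N^\infty(f)$.

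For the matching upper bound on $f_0$, I would exploit the structure of the canonical form: the surface splits along an $f_0$-invariant multicurve into finite-order and pseudo-Anosov pieces, and on each pseudo-Anosov piece with dilatation $\lambda_i$ the entropy is $\log\lambda_i$ by the FLP computation recalled in Section~\ref{sec:entropy}. The Markov partition arising from the invariant singular foliations produces exactly $\sim\lambda_i^n$ periodic points of period $n$, each essential of index $\pm 1$; and the expansion/contraction of the unstable/stable foliations forces asymptotically all these periodic orbits to carry pairwise distinct $\Gamma$-coordinates, so they populate $\sim\lambda_i^n$ distinct essential $n$-orbit classes. Taking the maximum over pieces gives $N^\infty(f_0)=\max_i\lambda_i$ and $h(f_0)=\log\max_i\lambda_i$, matching. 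The main obstacle is precisely this injectivity statement: producing $\sim\lambda_i^n$ periodic points on a pseudo-Anosov piece is routine from symbolic dynamics, but showing that the map from periodic orbits of $f_0$ to free homotopy classes of loops in $\torus{f_0}$ is asymptotically one-to-one requires careful use of the foliation geometry, and in the reducible case one must additionally rule out Nielsen classes from distinct pseudo-Anosov pieces coalescing across the reducing curves.
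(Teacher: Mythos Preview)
The paper does not supply a proof of this theorem; it is quoted as a result of Ivanov \cite{I} and used as a black box (see the sentence immediately preceding the statement and equation \eqref{eq:dilatation_and_Nielsen} immediately after). There is therefore no in-paper argument to compare your proposal against.

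On your sketch taken on its own merits: the broad outline is the standard one, but your ``transport the result back by homotopy invariance'' step does not work as written, because $h(f)$ is \emph{not} a homotopy invariant while $N^\infty(f)$ is. Proving $h(f_0)=\log N^\infty(f_0)$ for the canonical representative $f_0$ only yields that $\log N^\infty(f)$ equals the \emph{minimal} entropy in the isotopy class of $f$, not $h(f)$ for an arbitrary $f$. (The paper in fact only applies the theorem in this minimal-entropy form, to a pseudo-Anosov representative, so this is the operative content.) Your Bowen-separation argument for the lower bound is the right idea but needs one more sentence: to conclude that $(n,\varepsilon)$-closeness forces equal $\Gamma$-coordinates you need the short geodesics $\gamma_i$ from $f^i(x)$ to $f^i(y)$ to satisfy $f(\gamma_i)\simeq\gamma_{i+1}$ rel endpoints, which follows from uniform continuity of the homeomorphism on the compact surface together with an injectivity-radius bound for a hyperbolic metric on $X$.
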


For a compact surface $X$ with negative Euler characteristic, we take a pseudo-Anosov homeomorphism $f$ of $X$ with the dilatation $\lambda>1$. Then 
we obtain that 
\begin{equation}\label{eq:dilatation_and_Nielsen}
h(f)=\log\lambda=\log N^\infty(f)
\end{equation}
is the minimal entropy in the homotopy class of $f$. 



\section{The representation $\rep$ and fixed points}\label{sec:trace_formula}

\subsection{The definition of $\rep$}\label{sec:representation}


In \cite{homology_of_Hm_and_IP}, Bigelow defined the triangle corresponding to the embedded edge for $m=2$. Triangles are elements of the relative homology of some abelian covering of the configuration space $\Conf$. In this subsection we define $\rep$ due to Jiang and Zheng by using the lifts of triangles to the universal covering. Let $\BraidR$ denote the group ring $\mathbb{Z}[\Braid]$ and $\IntertwiningR$ denote the group ring $\mathbb{Z}[\IntertwiningBraid]$. 


We introduce some relative homology of the universal covering of the configuration space $\Conf$. Let $\ConfCov:\Confunivcov\rightarrow\Conf$ be the universal covering of $\Conf$ and fix $\Conflift{c}\in \ConfCov^{-1}(c)$ as a base point of $\Confunivcov$. For $\varepsilon>0$, we define $V_\varepsilon$ to be the set of points $\{x_1,\dots,x_m\}$ in $\Conf$ such that at least one of the pair $(x_i,x_j)$ is within distance $\varepsilon$ of each other. We define $\Conflift{V_\varepsilon}$ to be the preimage of $V_\varepsilon$ in $\Confunivcov$. The relative homology $\Hm{\Confunivcov,\partial\Confunivcov\cup\Conflift{V_\varepsilon}}$ is nested by inclusion. 


$\Confmap{\representative}$ has a unique lift $\Conflift{\representative}:(\Confunivcov,\Conflift{c})\rightarrow(\Confunivcov,\Conflift{c})$ and induces an automorphism of the left $\BraidR$-module 
\[
\lim_{\varepsilon\to0}\Hm{\Confunivcov,\partial\Confunivcov\cup\Conflift{V_\varepsilon}}. 
\]
The induced automorphism is independent of the choice of the representative and denoted by $\Conflift{\mc}_*$. 

The intertwining $(n,m)$-braid group $\IntertwiningBraid$ is isomorphic to the subgroup $\AlgIntertwiningBraid$ of $B_{n+m}$ generated by 
\[
\sigma_1,\dots,\sigma_{n-1},\sigma_n^2,\sigma_{n+1},\dots,\sigma_{n+m-1}
\]
and $\Braid$ is isomorphic to the subgroup $\AlgBraid$ of $B_{n+m}$ generated by 
\[
A_{1,n+1},\dots,A_{n,n+1},\sigma_{n+1},\dots,\sigma_{n+m-1}, 
\]
where $A_{ij}$ is defined by 
\[
A_{ij}=\sigma_{j-1}\dots\sigma_{i+1}\sigma_i^2\sigma_{i+1}^{-1}\dots\sigma_{j-1}^{-1}. 
\]
Therefore $B_n$ acts on $\IntertwiningBraid$ by the right multiplication and so there exists an induced action of $\mc$ on the $\IntertwiningR$. Moreover, since $\Braid$ is included in $\IntertwiningBraid$, $\IntertwiningR$ is a right $\BraidR$-module. Using the $\mathbb{Z}$-module automorphism $\Conflift{\mc}_*$ and the action on $\IntertwiningBraid$ by $B_n$, we construct an automorphism $\mc\otimes\Conflift{\mc}_*$ on the left $\IntertwiningR$-module 
\[
\IntertwiningR\otimes_{\BraidR}\lim_{\varepsilon\to0}\Hm{\Confunivcov,\partial\Confunivcov\cup\Conflift{V_\varepsilon}} 
\]
by 
\[
(\mc\otimes\Conflift{\mc}_*)(h\otimes c)=h\mc\otimes\Conflift{\mc}_*(c). 
\]
%
Clearly $\mc\otimes\Conflift{\mc}_*$ is a $\IntertwiningR$-homomorphism. 


From now on, we define a representation $\rep$ of $B_n$ over the free left $\IntertwiningR$-module generated by $\basis$. 
The cardinality $d_{n,m}$ of the basis $\basis$ is $\left(
\begin{array}{c}
n+m-2 \\
m
\end{array}
\right)$. 

We now introduce some other relative homology and an intersection pairing. Henceforth every path is a continuous map from $I=[0, 1]$. For $\varepsilon>0$, we define $U_\varepsilon$ to be the set of points $\{x_1,\dots,x_m\}\in\Conf$ such that at least one of them is within distance $\varepsilon$ of some puncture point. We define $\Conflift{U}_\varepsilon$ to be the preimage of $\ConfCov$ in $\Confunivcov$. The relative homology $\Hm{\Confunivcov,\Conflift{U_\varepsilon}}$ is nested by inclusion. 


\begin{figure}[tbp]
\begin{center}
\includegraphics[width=\textwidth]{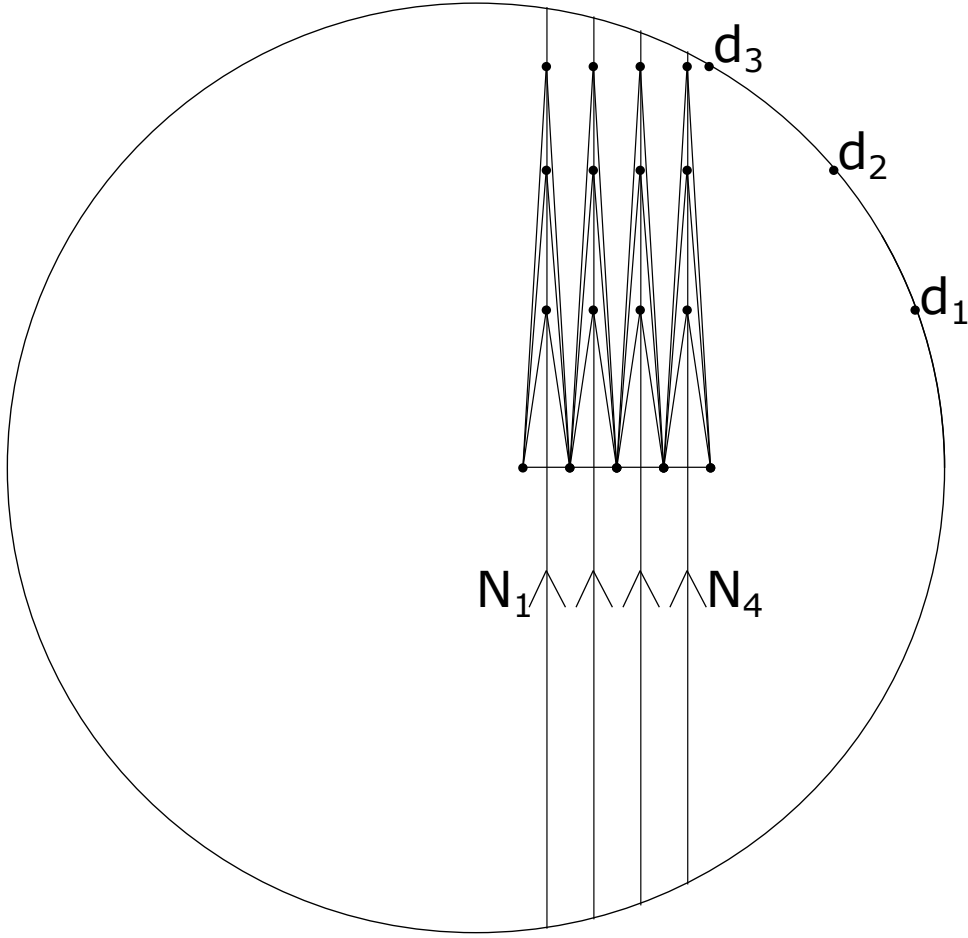}
\end{center}
\caption{The picture for $n=5$ and $m=3$}
\label{fig:forketc}
\end{figure}

We set 
\[
\begin{array}{l}
\displaystyle{p_i=\left(\frac{i}{2n},0\right), \Punc=\{p_1,\dots,p_n\},} \\
\displaystyle{d_j=\left(\cos\frac{j}{3m}\pi,\sin\frac{j}{3m}\pi\right), c=\{d_1,\dots,d_m\},} \\
\displaystyle{N_i=\left\{x=\frac{2i+1}{4n}\right\}\cap\X, \alpha_i=\left\{(x,0)\relmiddle|\frac{i}{2n}<x<\frac{i+1}{2n}\right\},} \\
\displaystyle{z_i^j=\left(\frac{2i+1}{4n},\sin\frac{j}{3m}\pi\right)} 
\end{array}
\]
and let $\alpha_i^j$ be a polygonal line connecting $p_i$, $z_i^j$ and $p_{i+1}$. 
We call $\alpha_i^j$ \emph{fork}. For $\mu\in\basis$, we set 
\[
F_\mu=\left\{\{x_1,\dots,x_m\}\in\Conf\relmiddle|\#(\{x_1,\dots,x_m\}\cap N_i)=\mu_i\right\}
\]
and 
\[
S_\mu=\prod_{i=1}^{\Euler}\prod_{j=u_i+1}^{u_{i+1}}\interior\alpha_i^j, 
\]
where $u_i=\sum_{j=1}^{i-1}\mu_j$. We take line segments $\theta_j$ on $\Sn$ from $c_j$ to $z_i^j$, where $u_i<j\leq u_{i+1}$. We notice that they are disjoint. Let $z_\mu$ be the endpoint of $\Theta_\mu=\{\theta_1,\dots,\theta_m\}$. We take a lift $\Conflift{z}_\mu$ of $z_\mu$ so that the lift $\Conflift{\Theta}_\mu$ of $\Theta_\mu$ is starting at $\Conflift{c}$ and ending at $\Conflift{z}_\mu$. We take lifts $\Conflift{F}_\mu$ and $\Conflift{S}_\mu$ of $F_\mu$ and $S_\mu$ containing $\Conflift{z}_\mu$ respectively. Let $\homology{X}$ denote the element of certain relative homology corresponding to the $m$-dimensional subspace $X$ of $\Confunivcov$. We set 
\[
\mathcal{H}_F=\bigoplus_{\mu\in\basis}\BraidR\homology{\Conflift{F}_\mu}\subset\lim_{\varepsilon\to0}\Hm{\Confunivcov,\partial\Confunivcov\cup\Conflift{V_\varepsilon}} 
\]
and 
\[
\mathcal{H}_S=\bigoplus_{\mu\in\basis}\BraidR\homology{\Conflift{S}_\mu}\subset\lim_{\varepsilon\to0}\Hm{\Confunivcov,\Conflift{U_\varepsilon}}. 
\]
For $x\in\mathcal{H}_S$ and $y\in\mathcal{H}_F$, let $(x\cdot y)\in\mathbb{Z}$ denote the standard intersection number. In  \cite{homology_of_Hm_and_IP} for $m=2$ and \cite{Jones_poly}, Bigelow defined an intersection pairing. Similarly, we define an intersection pairing 
\[
\IP{\cdot}{\cdot}:\mathcal{H}_S\times \mathcal{H}_F\rightarrow\BraidR
\mathrm{\:by\:} 
\IP{x}{y}=\sum_{\beta\in\BraidR}(x\cdot\Conflift{\beta}_*(y))\beta. 
\]
We notice that $\IP{\homology{\Conflift{S}_\mu}}{\homology{\Conflift{F}_\nu}}$ equals $1$ when $\mu=\nu$ and $0$ otherwise. Therefore $\left\{\homology{\Conflift{F}_\mu}\right\}_{\mu\in\basis}$ is linearly independent. We define elements $d_{\mu\nu}^{(\mc)}$ of $\BraidR$ so that $\{d_{\mu\nu}^{(\mc)}\}_{\mu,\nu\in\basis}$ satisfies the relations 
$\sum_\nu d_{\mu\nu}^{(\mc)}\homology{\Conflift{F}_\nu}=\Conflift{\mc}_*\left(\homology{\Conflift{F}_\mu}\right)$. 
for all $\mu\in\basis$. 
Using the intersection pairing, we obtain  
\begin{equation}\label{eq:mu,nu_entry}
d_{\mu\nu}^{(\mc)}=\tau\left(\IP{\homology{\Conflift{S}_\nu}}{\Conflift{\mc}_*\left(\homology{\Conflift{F}_\mu}\right)}\right), 
\end{equation}
where $\tau$ is an automorphism of $\BraidR$ with $\tau(\mc)=\mc^{-1}$. 
There exists a homomorphism 
\[
\rep^\prime:B_n\rightarrow\Aut_{\IntertwiningR}\left(\IntertwiningR\otimes_{\BraidR}\mathcal{H}_F\right)
\]
defined by $\rep^\prime(\mc)=(\mc\otimes\Conflift{\mc}_*)|_{\mathcal{H}_F}$. We notice that 
$\IntertwiningR\otimes_{\BraidR}\mathcal{H}_F\cong\bigoplus_{\mu\in\basis}\IntertwiningR\homology{\Conflift{F}_\mu}$
and this gives the representation $\rep$ to the matrix group 
$\GL(d_{n,m},\IntertwiningR)$. 
We set $\rep(\mc)=(c_{\mu\nu}^{(\mc)})$ and notice that $c_{\mu\nu}^{(\mc)}=\mc d_{\mu\nu}^{(\mc)}$ in $\IntertwiningR$. 
%
It is straightforward that the map $\rep$ is a group homomorphism. 


We recall the definition of trace. Let $\Gamma$ be a group, $\mathbb{Z}\Gamma$ its group ring, $\Gamma_c$ the set of conjugacy classes, $\mathbb{Z}\Gamma_c$ the free Abelian group generated by $\Gamma_c$, and $\pi_\Gamma:\mathbb{Z}\Gamma\rightarrow\mathbb{Z}\Gamma_c$ the natural projection. Let $\zeta$ be an endomorphism of a free $\mathbb{Z}\Gamma$-module satisfying $\zeta(v_i)=\sum_{j=1}^ka_{ij}\cdot v_j$ for a basis $\{v_1,\dots,v_k\}$. The \emph{trace} of $\zeta$ is defined as 
\[
\tr_\Gamma\zeta=\pi_\Gamma\left(\sum_{i=1}^ka_{ii}\right)\in\mathbb{Z}\Gamma_c. 
\]
The definition is independent of the choice of the basis
 and for two endomorphism $\zeta$ and $\xi$, we have 
$\tr_\Gamma\zeta\circ\xi=
\tr_\Gamma\xi\circ\zeta$. 

We note that, under the basis $\basis$, all matrix elements of $\rep(\beta)$ belong to $\mathbb{Z}\torusfund$. Therefore $\rep(\mc)$ can naturally be regarded as an endomorphism of the free $\mathbb{Z}\torusfund$-module generated by $\basis$. In this way, the notations $\trace$ and $\tracek{k}$ in the main theorem are well-defined. 

\begin{Th}\label{Th:main_theorem2} \emph{For any pseudo-Anosov braid $\mc\in B_n$, we denote by $\dil$ the dilatation of $\mc$. Then we obtain }
\[
\begin{array}{l}
\Growth{k}\norm{\tracek{k}}=\Growth{k}\tr\norm{\rep(\mc^k)}=\dil^m, \\
\Growth{m}\norm{\trace}=\dil. 
\end{array}
\]
\end{Th}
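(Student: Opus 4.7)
I would approach the theorem by interpreting $\tr_{\Gamma_{\mc^k,m}}\rep(\mc^k)$ as a generalised Lefschetz number of the induced self-map $\Confmap{\representative}^k$ on $\Conf$, then routing the conclusion through Ivanov's theorem (Theorem \ref{Th:entropy_vs_asymp}) combined with a topological entropy computation on the configuration space. Concretely, the representation is built from the action of $\Conflift{\representative}_*$ on the free $\mathbb{Z}\torusfund$-module $\mathcal{H}_F$ with basis $\{\homology{\Conflift{F}_\mu}\}_{\mu\in\basis}$, and the duality relation \eqref{eq:mu,nu_entry} between $\{\homology{\Conflift{F}_\mu}\}$ and $\{\homology{\Conflift{S}_\mu}\}$ under the intersection pairing $\IP{\cdot}{\cdot}$ packages exactly the intersection data needed for a Reidemeister-Nielsen trace formula. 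The first step of the proof is therefore to establish
\[
\tr_{\Gamma_{\mc^k,m}}\rep(\mc^k)\;=\;\pm\,L_{\Gamma_{\mc^k,m}}(\Confmap{\representative}^k)\qquad\text{in }\mathbb{Z}(\Gamma_{\mc^k,m})_c,
\]
as a consequence of the general trace-index identification applied to the covering of $\Conf$ corresponding to $\Gamma_{\mc^k,m}$.

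\textbf{Step 2 (Nielsen and entropy).} Choosing a pseudo-Anosov representative $\representative$ and using that it is sufficiently well-behaved, I would verify the Property of Bounded Index and Property of Essential Irreducibility for $\Confmap{\representative}$ on $\Conf$ in the stratified setting of \cite{stratified,forcing_relation}; by Remark \ref{rem:PBI}, this gives $L^\infty(\Confmap{\representative})=N^\infty(\Confmap{\representative})$, so Step 1 yields
\[
\Growth{k}\norm{\tracek{k}}\;=\;\Growth{k}\norm{L_{\Gamma_{\mc^k,m}}(\Confmap{\representative}^k)}\;=\;N^\infty(\Confmap{\representative}).
\]
By Ivanov's theorem, $\log N^\infty(\Confmap{\representative})=h(\Confmap{\representative})$. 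Since $\Confmap{\representative}$ is the $\mathcal{S}_m$-quotient of the Cartesian power $\representative^{\times m}$ and topological entropy is multiplicative under direct products and invariant under finite-to-one semi-conjugacies, $h(\Confmap{\representative})=m\,h(\representative)=m\log\dil$ by \eqref{eq:dilatation_and_Nielsen}. This gives $\Growth{k}\norm{\tracek{k}}=\dil^m$. The middle equality $\Growth{k}\norm{\tracek{k}}=\Growth{k}\tr\norm{\rep(\mc^k)}$ follows from the triangle inequality in one direction, and from Bounded Index in the other: cancellations inside a single conjugacy class of $\torusfund$ under $\pi_\Gamma$ are controlled by bounded-in-$k$ indices and hence cannot affect the exponential growth rate.

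\textbf{Step 3 (the $m$-growth).} For the second equation I would specialise Step 1 to $k=1$ and estimate $\norm{L_{\Gamma_{\mc,m}}(\Confmap{\representative})}$ directly in $m$. A fixed point of $\Confmap{\representative}$ on $\Conf$ is an unordered $m$-subset of $\Sn$ that decomposes as a union of periodic orbits of $\representative$ of total length $m$; for the pseudo-Anosov $\representative$ the number of period-$p$ points of $\representative$ grows like $\dil^p$, and an elementary generating-function estimate (the standard Artin-Mazur zeta-function argument) shows that the number of invariant $m$-subsets also grows like $\dil^m$ up to subexponential-in-$m$ corrections. Combined with Bounded Index to pass from fixed-point counts to $\norm{L_{\Gamma_{\mc,m}}(\Confmap{\representative})}$, extracting the $m$-th root yields $\Growth{m}\norm{\trace}=\dil$.

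\textbf{Main obstacle.} The principal difficulty is Step 1: because $\Conf$ is non-compact and the classes $\homology{\Conflift{F}_\mu}$ live in a $\lim_{\varepsilon\to 0}$ of relative homologies, the trace-Lefschetz identification requires the stratified Nielsen machinery of \cite{stratified,forcing_relation} rather than the classical closed-manifold argument, and one has to be careful that the essential fixed-point classes of $\Confmap{\representative}^k$ are precisely the ones recorded by the diagonal entries of $\rep(\mc^k)$. A secondary subtlety is that Ivanov's theorem controls only $k$-growth for fixed $m$, so the $m$-growth assertion in Step 3 cannot be extracted from Ivanov alone and genuinely requires the quantitative periodic-point count for pseudo-Anosov maps.
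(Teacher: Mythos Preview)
Your Step~1 (identifying $\trace$ with $\pm L_{\torusfund}(\Confmap{\representative})$ via the intersection pairing) and the combinatorics of Step~3 are essentially what the paper does: the Key Lemma~\ref{Lem:Key_lemma} is precisely that identification for the good representative $\Confmap{g}$, and Proposition~\ref{Prop:upper_bound_for_m} together with Lemma~\ref{Lem:summand_number} carries out the partition-type count you sketch.

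The genuine gap is in Step~2. You invoke Ivanov's theorem to obtain $\log N^\infty(\Confmap{\representative})=h(\Confmap{\representative})$, but Theorem~\ref{Th:entropy_vs_asymp} is stated and proved only for self-maps of \emph{compact surfaces} with negative Euler characteristic; its proof rests on the Nielsen--Thurston classification and the structure of pseudo-Anosov maps, neither of which is available on the $2m$-dimensional non-compact space $\Conf$. So the equality $N^\infty(\Confmap{\representative})=e^{h(\Confmap{\representative})}$ is unjustified, and with it the upper bound $\Growth{k}\norm{\tracek{k}}\leq\dil^m$ (and, a fortiori, the stronger bound on $\tr\norm{\rep(\mc^k)}$, which counts geometric intersections before any cancellation and is not controlled by Nielsen numbers at all).

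The paper never applies Ivanov above dimension two. For the upper bound it argues directly on the surface: Proposition~\ref{Prop:upper_bound_for_k} bounds each diagonal entry $\norm{c_{\mu\mu}^{(\mc^k)}}$ by a product of arc-intersection numbers $K_{ij}^k$ on $\Y$, and then controls $\Growth{k}\sum_{i,j}K_{ij}^k$ by the entropy $h(g)$ on $\Y$ via an explicit open-cover argument (Lemma~\ref{Lem:open_cover}). For the lower bound, Proposition~\ref{Prop:lower_bound} passes from $\#\eFix(\Confmap{g}^k)$ back down to primary periodic orbits of $g$ on $\Y$ and uses Ivanov only there, where it is valid. Your outline becomes a proof once Step~2 is replaced by such a surface-level argument; the entropy computation $h(\Confmap{\representative})=m\log\dil$ is plausible but is neither needed nor sufficient on its own.
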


\subsection{The work of Jiang and Zheng}\label{sec:JZ08}


The representation $\rep$ is the same as the representation due to Jiang and Zheng \cite{forcing_relation}. We compactify $\Sn$ to a 2-disk with $n$ holes and denote it by $\Y$, and assume further that there exists a homeomorphism $\cmc:\Y\rightarrow\Y$ such that $\representative$ is the map restricting $\cmc$ on $\interior\Y$. We identify $\interior{\Y}\cup \partial\X$ with $\Sn$. We decompose the surface $\Y$ into an anulus and $n-1$ foliated rectangles, as shown in Figure \ref{fig:Yn}. 


\begin{figure}[tbp]
\begin{center}
\includegraphics[width=\textwidth]{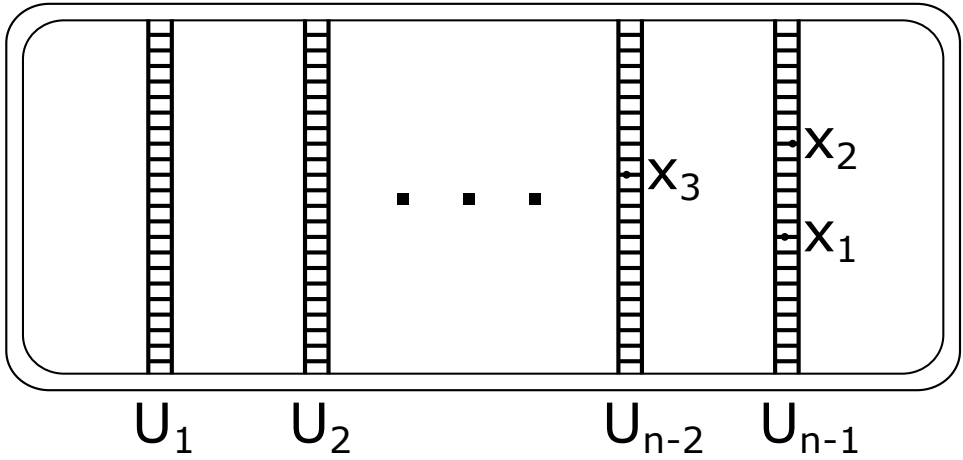}
\end{center}
\caption{Decomposition of $\Y$}
\label{fig:Yn}
\end{figure}

We define $U=U_1\cup\dots\cup U_{n-1}$ to be the union of the $n-1$ foliated open rectangles. We define a partial ordering on $U$ such that $x_1\prec x_2$ if either $x_1$ lies in a rectangle to the right of $x_2$ or $x_1$ lies in a strictly lower leaf of the same rectangle as $x_2$. For example, the order of the three points in Figure \ref{fig:Yn} is $x_1\prec x_2\prec x_3$. 

We set 
\[
V=\left\{\{x_1,\dots,x_m\}\in\C_{m,0}(\Y)\relmiddle|x_i\in U,
\begin{array}{l}
\mathrm{there\:exists\:}\eta\in\mathcal{S}_m\st \\
x_{\eta(1)}\prec\dots\prec x_{\eta(m)}
\end{array}
\right\}. 
\]
Then we have $V=\bigcup_{\mu\in\basis}V_\mu$, where 
\[
V_\mu=\left\{\{x_1,\dots,x_m\}\in V\relmiddle|
\#\{x_1,\dots,x_m\}\cap U_i=\mu_i\right\}. 
\]
Each $V_\mu$ is connected; thus the elements of $\basis$ are in one-to-one correspondence to the components of $V$. 


\begin{figure}[tbp]
\begin{center}
\includegraphics[width=\textwidth]{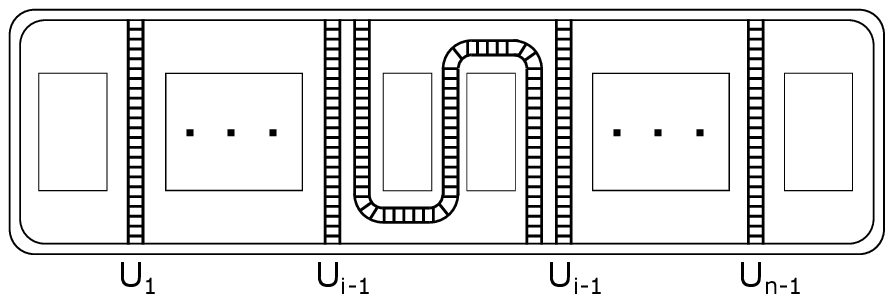}
\end{center}
\caption{The image of the self map $\phi_i$}
\label{fig:phi}
\begin{center}
\includegraphics[width=\textwidth]{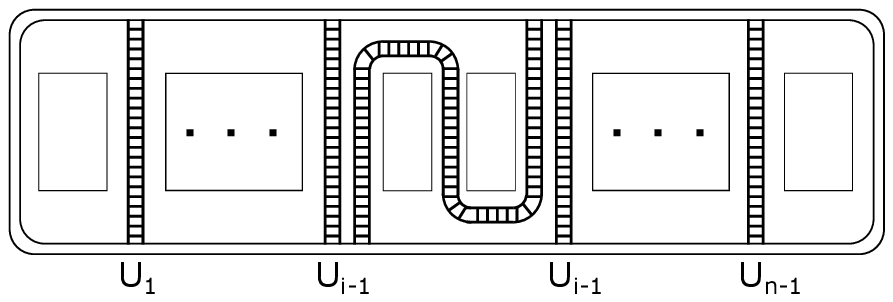}
\end{center}
\caption{The image of the self map $\overline{\phi}_i$}
\label{fig:phibar}
\end{figure}

Illustrated in Figure \ref{fig:phi} and Figure \ref{fig:phibar} are two embeddings $\phi_i$ and $\overline{\phi}_i$, which can be understood as the action of the elementary mapping $\sigma_i$ and $\sigma_i^{-1}$ on $\Y$ respectively. Both push the annulus outward, irrationally rotate the outmost boundary, keep the foliations of $(\phi_i)^{-1}(U)$ and $(\overline{\phi}_i)^{-1}(U)$, uniformly contract along the leaves of the foliations, and uniformly expand along the transversal direction. 

For every $\phi\in\{\phi_1,\dots,\phi_{\Euler},\overline{\phi}_1,\dots,\overline{\phi}_{\Euler}\}$, we have 
\[
V_\mu\cap\phi^{-1}(V_\nu)=\bigcup_{\eta\in\mathcal{S}_m}W_{\mu\nu\eta}^{(\phi)}, 
\]
where 
\[
W_{\mu\nu\eta}^{(\phi)}=\left\{x\in V_\mu\cap\phi^{-1}(V_\nu)\relmiddle|
\begin{array}{l}
\mathrm{there\:exist\:} x_1,\dots,x_m\st \\
x=\{x_1,\dots,x_m\}, \\
x_{\eta(1)}\prec\dots\prec x_{\eta(m)}, \\
\phi(x_1)\prec\dots\prec\phi(x_m), \\
\end{array}
\right\}. 
\]
Each $W_{\mu\nu\eta}^{(\phi)}$ is connected; thus the elements of the set $\{\eta\in\mathcal{S}_m\mid W_{\mu\nu\eta}^{(\phi)}\neq\emptyset\}$ are in one-to-one correspondence to the components of $V_\mu\cap\phi^{-1}(V_\nu)$. 

We choose a base point $b=\{b_1,\dots,b_m\}$ in $\interior A$. For every element $x=\{x_1,\dots,x_m\}$ in $V$ with $x_1\prec\dots\prec x_m$, the disjoint ``descending'' paths connecting $b_k$ to $x_k$ in $\Y$ give rise to a path $\gamma_x$ in $\C_{n,m}(\Y)$. Similarly, the disjoint ``ascending'' paths connecting $b_k$ to $\phi(b_k)$ give rise to a path $\gamma_{\phi(b)}$ in $\C_{n,m}(\Y)$. For every nonempty $W_{\mu\nu\eta}^{(\phi)}$, we choose a point $x\in W_{\mu\nu\eta}^{(\phi)}$ and $\alpha_{\mu\nu\eta}^{(\phi)}$ denotes the element of $\pi_1(\C_{n,m}(\Y),b)$ represented by the loop $\gamma_{\phi(b)}\cdot\phi(\gamma_x)\cdot\gamma^{-1}_{\phi(x)}$. We note that $\alpha_{\mu\nu\eta}^{(\phi)}$ is independent of the choices of $x$, $\gamma_x$, $\gamma_{\phi(b)}$ and $\gamma_{\phi(x)}$. 

In \cite{forcing_relation}, Jiang and Zheng showed that the equations 
\[
\begin{array}{l}
\displaystyle{\mu\cdot\rep(\sigma_i)=\sum_{\nu\in\basis}c_{\mu\nu}^{(i)}\cdot\nu}, \\
\displaystyle{\mu\cdot\rep(\sigma_i^{-1})=\sum_{\nu\in\basis}d_{\mu\nu}^{(i)}\cdot\nu}, 
\end{array}
\]
where 
\[
\begin{array}{l}
\displaystyle{c_{\mu\nu}^{(i)}=(-1)^{\nu_i}\cdot\sigma_i\cdot\sum_{\eta:W_{\mu\nu\eta}^{(\phi)}\neq\emptyset}\sgn\eta\cdot\alpha_{\mu\nu\eta}^{(\phi_i)}}, \\
\displaystyle{d_{\mu\nu}^{(i)}=(-1)^{\nu_i}\cdot\sigma_i^{-1}\cdot\sum_{\eta:W_{\mu\nu\eta}^{(\phi)}\neq\emptyset}\sgn\eta\cdot\alpha_{\mu\nu\eta}^{(\overline{\phi}_i)}}, 
\end{array}
\]
give rise to a group representation of $B_n$ over the free $\mathbb{Z}B_{n+m}$ module generated by $\basis$. 

We take the base point $b$ in $\Theta_\mu\cap A$. We can take the base point $b$ independent of $\mu$ because of the definition of $\Theta_\mu$ and $A$. Let $\Theta_b$ be a path from $b$ to $\Theta_\mu(1)$ along $\Theta_\mu$ and $\Theta_b^\prime$ be a path from $b$ to $\Theta_\mu(0)$ along $\Theta_\mu$. We identify $\pi_1(\Conf,c)$ with $\pi_1(\C_{0,m}(\Y),b)$ by the map induced by $\Theta_b$. 

\begin{Prop}\label{Prop:two_rep}
\emph{The representation defined above and the representation $\rep$ give the same matrix for any braid under the above identification. }
\end{Prop}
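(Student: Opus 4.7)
The plan is to verify the equality on a generating set of $B_n$ and then propagate it multiplicatively. Both representations are group homomorphisms from $B_n$ into matrices, so it suffices to check that the matrix entries $c^{(\sigma_i)}_{\mu\nu}$ (and $d^{(\sigma_i)}_{\mu\nu}$ for $\sigma_i^{-1}$) arising from the formula \eqref{eq:mu,nu_entry} coincide with the entries $c^{(i)}_{\mu\nu}$ and $d^{(i)}_{\mu\nu}$ written down by Jiang--Zheng, for every $i=1,\dots,n-1$ and every $\mu,\nu\in\basis$. The base-point identification furnished by the path $\Theta_b$ has already been set up, so no ambiguity remains in comparing group-ring elements.

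First I would pick the representative homeomorphism of $\sigma_i$ (respectively $\sigma_i^{-1}$) to be the embedding $\phi_i$ (respectively $\overline{\phi}_i$) of Figures \ref{fig:phi} and \ref{fig:phibar}; this is permissible because the lift $\Conflift{\mc}_*$ depends only on the isotopy class. With this choice, $\Confmap{\phi_i}(F_\mu)$ is a transverse cycle to $S_\nu$, and the geometric intersection $S_\nu\cap\Confmap{\phi_i}(F_\mu)$ decomposes into components which are precisely in bijection with the non-empty strata $W^{(\phi_i)}_{\mu\nu\eta}$. Concretely, a point of $F_\mu$ is an $m$-tuple of transverse arc slices, a point of $\Confmap{\phi_i}(F_\mu)$ is its image under the expanding direction of $\phi_i$, and being simultaneously in $S_\nu$ forces the image tuple to lie in the rectangles prescribed by $\nu$; the permutation $\eta$ is exactly the reordering of the points that takes the $\prec$-order before $\phi_i$ to the $\prec$-order after.

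Next I would compute the local sign of each such intersection. The intersection of the ``vertical'' sheet $\Conflift{S}_\nu$ with the expanded image of $\Conflift{F}_\mu$ is transverse; the contribution $\sgn\eta$ comes from comparing the orientation of the unordered configuration spaces $\Conf$, $\C_{0,m}(\Y)$ with the ordered product orientations used to define $F_\mu$ and $S_\nu$ (reordering the $m$ strands by $\eta$), while the factor $(-1)^{\nu_i}$ comes from the local sign at the puncture $p_i$, reflecting that $\phi_i$ reverses the orientation of $\nu_i$-many forks relative to the chosen orientation of the forks $\alpha_i^j$. Summing transverse intersections over components then gives exactly the $\sum_\eta\sgn\eta\cdot\alpha^{(\phi_i)}_{\mu\nu\eta}$ combinatorial sum.

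The main work, and what I expect to be the main obstacle, is the identification of the deck-transformation coefficient assigned to each intersection with the loop class $\alpha^{(\phi_i)}_{\mu\nu\eta}\in\pi_1(\C_{n,m}(\Y),b)$. The recipe is standard: for a chosen lift $\Conflift{z}_\nu\in\Conflift{S}_\nu$ and the chosen lift $\Conflift{z}_\mu\in\Conflift{F}_\mu$, the coefficient is the unique deck transformation that carries $\Conflift{\phi_i}_*(\Conflift{z}_\mu)$ into the connected component of $\Conflift{S}_\nu\cap\Conflift{\phi_i}_*(\Conflift{F}_\mu)$ corresponding to the given intersection. Unwinding this through the paths $\Theta_b,\Theta_b'$ and the descending/ascending paths $\gamma_x,\gamma_{\phi(b)}$ used by Jiang--Zheng yields, up to base-point homotopy, precisely the loop $\gamma_{\phi(b)}\cdot\phi(\gamma_x)\cdot\gamma^{-1}_{\phi(x)}$ representing $\alpha^{(\phi_i)}_{\mu\nu\eta}$. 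Finally, the automorphism $\tau$ in \eqref{eq:mu,nu_entry} absorbs the outer factor of $\sigma_i$ (respectively $\sigma_i^{-1}$) that appears in Jiang--Zheng's formulas after one passes from $\BraidR$-coefficients to $\IntertwiningR$-coefficients via $c^{(\sigma_i)}_{\mu\nu}=\sigma_i\cdot d^{(\sigma_i)}_{\mu\nu}$, so that the two descriptions of the matrix agree entry by entry.
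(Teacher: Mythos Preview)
Your proposal is correct and follows essentially the same route as the paper's proof: both check the identity on the generators $\sigma_i^{\pm1}$, use the embeddings $\phi_i,\overline{\phi}_i$ as representatives, put the components of the intersection in bijection with the nonempty $W^{(\phi_i)}_{\mu\nu\eta}$, read off the sign $(-1)^{\nu_i}\sgn\eta$, identify the deck-transformation coefficient with the loop $\alpha^{(\phi_i)}_{\mu\nu\eta}$ via the paths $\Theta_b,\Theta_b',\gamma_x,\gamma_{\phi(b)}$, and then match the extra $\sigma_i$ factor with the tensoring action. One small imprecision: the outer $\sigma_i$ in Jiang--Zheng's formula is not ``absorbed by $\tau$'' but rather arises from the passage $c^{(\sigma_i)}_{\mu\nu}=\sigma_i\cdot d^{(\sigma_i)}_{\mu\nu}$, i.e.\ from the left-tensoring action on $\IntertwiningR$; the involution $\tau$ only serves to invert the group-ring element coming from the pairing.
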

\begin{proof}
We consider the case $\mc=\sigma_i$ and the case $\mc=\sigma_i^{-1}$ is similar. We notice that $F_\mu$ is given by shrinking $V_\mu$ along the leaves of foliations and then $\Confmap{\phi}(W_{\mu\nu\eta}^{\phi})$ is homotopy equivalent to $F_\nu$. Therefore the nonzero terms of $\Conflift{\sigma_i}_*(\homology{\Conflift{F_\mu}})$ are in one-to-one correspondence to the components of $V_\mu\cap\phi^{-1}(V_\nu)$, which are in one-to-one correspondence to the elements of the set $\{\eta\in\mathcal{S}_m\mid W_{\mu\nu\eta}^{(\phi_i)}\neq\emptyset\}$. 

There exists a homotopy $\{H:\Sn\times I\rightarrow\Sn\}$ with $H(x,0)=\phi_i(x)$ and $H(x,1)=\representative(x)$ such that a map $H(\cdot,t)$ defined by $H(\cdot,t)(x)=H(x,t)$ is injective for any $t$. Let $\Confmap{H}:\Conf\times I\rightarrow\Conf$ be the map defined by $\Confmap{H}(\{x_1,\dots,x_m\},t)=\{H(x_1,t),\dots,H(x_m,t)\}$ and $\Confmap{H}(x,\cdot)$ be the path defined by $\Confmap{H}(x,\cdot)(t)=\Confmap{H}(x,t)$. 

For nonempty $W_{\mu\nu\eta}^{(\phi_i)}$, we take an element $x$ in $W_{\mu\nu\eta}^{(\phi_i)}\cap F_\mu$. We take $\gamma_x$ the composition of two paths $\Theta_b$ and the path from $z_\mu$ to $x$ in $F_\mu$. Since $\gamma_{\phi(b)}$ is homotopic to the composition of two paths $\Theta_b^\prime$ and $\Confmap{\phi_i}(\Theta_b^\prime)^{-1}$ relative to the endpoints, the loop $\Confmap{\representative}(\gamma_x)\gamma_{\Confmap{\representative}(x)}^{-1}$ is identified with $\alpha_{\mu\nu\eta}^{(\phi_i)}$ by the above identification. Therefore $\alpha_{\mu\nu\eta}^{(\phi_i)}$ is the term of $\Conflift{\sigma_i}_*(\Conflift{F_\mu})$ corresponding to $W_{\mu\nu\eta}^{(\phi_i)}$ and the signature is $(-1)^{\nu_i}\sgn\eta$. Finally, left multiplication of $\sigma_i$ and tensoring $\sigma_i$ from left induce the same action on $\IntertwiningR$. Therefore $\rep$ and the representation due to Jiang and Zheng \cite{forcing_relation} give the same matrix for all $\mc\in B_n$. 
\end{proof}

\subsection{Trace of $\rep$ and fixed points}\label{sec:trace}

In this subsection, we prove the key lemma of the proof of main theorem. We define $\eFix{f}$ to be the set of essential fixed points of $f$. We choose a word $\mc=\tau_1\dots\tau_N$, where $\tau_i$ is an element of $\{\sigma_1^{\pm1},\dots,\sigma_{n-1}^{\pm1}\}$. We put $\varphi_i=\phi_{j_i}$ if there exists a number $j_i$ satisfying $\tau_i=\sigma_{j_i}$ and $\varphi_i=\overline{\phi}_{j_i}$ if there exists a number $j_i$ satisfying $\tau_i=\sigma_{j_i}^{-1}$. Then the embedding $\emb=\varphi_N\dots\varphi_1:\Y\rightarrow\Y$ induces a map $\Confmap{\emb}:B_{n,m}(\Y)\rightarrow B_{n,m}(\Y)$. It is immediate from the definition of $\phi_i$ and $\overline{\phi}_i$ that $\Fix\Confmap{\emb}$ is a subset of $V$. 

We prove the next lemma whose proof is similar to that of \cite[Proposition 4.3.]{forcing_relation} by Jiang and Zheng. 

\begin{Lem}\label{Lem:Key_lemma}
\emph{There exists a positive number $B$ such that we have the inequality }
\[
\#\eFix(\Confmap{\emb}^k)\leq\norm{\tracek{k}}\leq B\#\eFix(\Confmap{\emb}^k). 
\]
\end{Lem}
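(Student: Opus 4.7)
The plan is to identify each diagonal entry of $\rep(\mc^k)$ with a signed contribution from a single fixed point of $\Confmap{\emb}^k$, so that $\norm{\tracek{k}}$ directly counts these fixed points up to possible cancellation within Nielsen classes.

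First I would apply the Jiang--Zheng formula of Proposition~\ref{Prop:two_rep} multiplicatively to the word $\mc^k=(\tau_1\cdots\tau_N)^k$. Composing the given formulas for $\rep(\sigma_i^{\pm 1})$ shows that, in the basis $\basis$, the diagonal entry $c_{\mu\mu}^{(\mc^k)}$ decomposes as a signed sum indexed by those permutations $\eta\in\mathcal{S}_m$ for which the component $W_{\mu\mu\eta}^{(\emb^k)}$ of $V_\mu\cap(\emb^k)^{-1}(V_\mu)$ is nonempty, each summand being of the form $(-1)^{*}\sgn(\eta)\cdot(\text{braid factor})\cdot\alpha_{\mu\mu\eta}^{(\emb^k)}$. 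Because every $\phi_i$ and $\overline{\phi}_i$ uniformly contracts along the leaves of the foliation and uniformly expands transversally, the same holds for $\emb^k$; hence each component $W_{\mu\mu\eta}^{(\emb^k)}$ is connected and contains exactly one fixed point of $\Confmap{\emb}^k$, isolated and of local index $\pm 1$. In particular every fixed point is essential, so $\Fix(\Confmap{\emb}^k)=\eFix(\Confmap{\emb}^k)$.

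Second, I would identify each loop $\alpha_{\mu\mu\eta}^{(\emb^k)}$ with the time-$k$ orbit curve, in the mapping torus of $\Confmap{\representative}$, of the corresponding fixed point $x$. By Remark~\ref{Rem:loop}, the free homotopy class of this orbit curve is exactly $\cdGamma{x}$. Combined with the decomposition above, this gives
\[
\tracek{k} \;=\; \sum_{x\in\Fix(\Confmap{\emb}^k)}\epsilon_x\cdot\cdGamma{x},\qquad \epsilon_x\in\{\pm 1\}.
\]
The upper bound $\norm{\tracek{k}}\leq B\cdot\#\eFix(\Confmap{\emb}^k)$ is then immediate from the triangle inequality (even with $B=1$), since each conjugacy class appears with coefficient at most the number of fixed points sharing that coordinate.

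The main obstacle is the lower bound $\#\eFix(\Confmap{\emb}^k)\leq\norm{\tracek{k}}$: a priori, two distinct fixed points that share a $\torusfund$-coordinate but enter with opposite signs could cancel in the sum, making $\norm{\tracek{k}}$ drop below the total count. The key remaining step is to verify that the sign $\epsilon_x$ depends only on the combinatorial data $(\mu,\eta)$ labelling the component $W_{\mu\mu\eta}^{(\emb^k)}$, and that this data can be recovered from the free homotopy class $[\alpha_{\mu\mu\eta}^{(\emb^k)}]$ on the mapping torus. Once this sign consistency within each Nielsen class is established, no cancellation occurs and in fact $\norm{\tracek{k}}=\#\eFix(\Confmap{\emb}^k)$, so the lemma holds with $B=1$.
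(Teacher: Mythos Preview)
Your overall architecture---decompose the diagonal entries of $\rep(\mc^k)$ as a signed sum over the components $W_{\mu\mu\eta}^{(\emb^k)}$, identify each component with a single hyperbolic fixed point of $\Confmap{\emb}^k$, and match the loop $\alpha_{\mu\mu\eta}^{(\emb^k)}$ with the mapping-torus coordinate via Remark~\ref{Rem:loop}---is exactly the route the paper takes, and your upper bound with $B=1$ is fine.

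The gap is in the lower bound, which you correctly isolate but do not close. Your proposed fix (showing that $(\mu,\eta)$ is recoverable from the free homotopy class, hence that the sign $\epsilon_x$ is constant on each Nielsen class) is not carried out, and in this formulation it is harder than necessary. The paper's resolution is a single observation you are missing: for the hyperbolic map $\Confmap{\emb}^k$, \emph{each Nielsen fixed point class is a singleton}. In other words, distinct fixed points $x\neq y$ always have distinct coordinates $\cd{x}\neq\cd{y}$ in $(\Gamma_{\mc^k,m})_c$. Once you know this, the identity
\[
(-1)^m\,\tracek{k}\;=\;\sum_{x\in\Fix(\Confmap{\emb}^k)}\ind{\Confmap{\emb}^k}{x}\cdot\cd{x}
\]
has no collisions at all among the basis elements $\cd{x}$, so $\norm{\tracek{k}}=\sum_x|\ind{\Confmap{\emb}^k}{x}|$. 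Since each index is $\pm1$ (hyperbolicity), this equals $\#\Fix(\Confmap{\emb}^k)=\#\eFix(\Confmap{\emb}^k)$, and the lower bound (indeed equality) follows. Your sign-consistency program is subsumed by this: if the coordinates are already pairwise distinct, there is nothing to cancel and no sign analysis is needed.
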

\begin{proof}
Without loss of generality, we only have to prove the case $k=1$. We note that each of the components $W_\mu^j$ of $\bigcup_{\mu\in\basis}V_\mu\cap(\Confmap{\emb})^{-1}(V_\mu)$ is homeomorphic to $\mathbb{R}^{2m}$. Since $\Confmap{\emb}$ is a hyperbolic map on $W_\mu^j$, there exists precisely one fixed point of $\Confmap{\emb}$ on $W_\mu^j$. Let $x_j\in W_\mu^j$ be the fixed point of $\Confmap{\emb}$ on $W_\mu^j$. We notice that the fixed point class containing $x$ consists of one element $x$. We set 
\[
\alpha^g(x_j)=\gamma_{\Confmap{\emb}(c)}\cdot(\Confmap{\emb})(\gamma_{x_j})\cdot\gamma^{-1}_{x_j}. 
\]
We obtain 
\[
\cd{x_j}=\freeloop{z\gamma_{\Confmap{\emb}(c)}\cdot(\Confmap{\emb})(\gamma_{x_j})\cdot\gamma^{-1}_{x_j}}=\mc\freeloop{\alpha^g(x_j)}\in(\torusfund)_c
\]
by Remark \ref{Rem:loop} and recall that 
\[
\ind{\Confmap{\emb}}{x_j}=\langle\mathrm{diag}(\Conf),\mathrm{graph}(\Confmap{\emb})\rangle|_{x_j} 
\]
is the definition of $\ind{\Confmap{\emb}}{x_j}$. 

On the other hand, we take a lift $\Conflift{x}$ of $x$ so that the lift $\Conflift{\gamma_x}$ of $\gamma_x$ is starting at $\Conflift{c}$ and ending at $\Conflift{x}$. Then we obtain $\Conflift{\emb}(\Conflift{x_j})=\alpha^g(x_j)\Conflift{x_j}$. Computing the fixed point index $\ind{\Confmap{\emb}}{x_j}$ of $\Confmap{\emb}$ at $x_j$, we obtain 
\[
\ind{\Confmap{\emb}}{x_j}=(-1)^m\left(\alpha^g(x_j)\Conflift{S}_\mu\cdot(\Conflift{\emb})_*(\Conflift{F}_\mu)\right). 
\]
Therefore we obtain 
\[
(-1)^m\freeloop{c_{\mu\mu}^{(\mc)}}=\sum_j\ind{\Confmap{\emb}}{x_j}\cd{x_j}, 
\]
where $\freeloop{c}$ is the element of the free abelian group $\mathbb{Z}(\torusfund)_c$ projecting $c$, and 
\[
(-1)^m\trace=\sum_{x\in\Fix{\Confmap{\emb}}}\ind{\Confmap{\emb}}{x}\cdot\cd{x}. 
\]
In the above equality, the number of nonzero terms in the right hand side is $\eFix(\Confmap{g})$. By Remark \ref{rem:PBI}, there exists a positive number $B$ such that the inequality 
\[
\#\eFix(\Confmap{\emb})\leq\norm{\trace}\leq B\#\eFix(\Confmap{\emb})
\]
holds. 
\end{proof}


We count the number of essential fixed points of $\Confmap{g}^k$. Let $\{x_1,\dots,x_m\}$ be a fixed point of $\Fix{(\Confmap{\emb}^k)}$. Then there exists an $m$-tuple $(n_1,\dots,n_m)$ of natural numbers with $\sum_{i=1}^min_i=m$ such that there exist $n_i$ periodic orbits of $\emb^k$ of period $i$ in $\{x_1,\dots,x_m\}$ for all $1\leq i\leq m$. Let $A_m$ be the set of such $m$-tuples and $D_i^k$ be the number of essential periodic points of $\emb^k$ of period $i$. Then there exist $D_i^k\slash i$ periodic orbits of $\emb^k$ of period $i$ and we obtain 
\[
\#\eFix{(\Confmap{\emb}^k)}=\sum_{(n_1,\dots,n_m)\in A_m}\prod_{i=1}^m\left(
\begin{array}{c}
D_i^k\slash i \\
n_i
\end{array}
\right). 
\]
\begin{Rem}\label{rem:inequality_of_Dik}
When we consider the period of periodic points of $\emb^k$ of period $i$ as periodic points of $\emb$, we notice that $D_i^k=D_{g(k,i)i}^1$, where $g(k,i)$ is the greatest common divisor of $k$ and $i$. Moreover, if $i$ a divisor of $k$ then periodic orbits of $\emb$ of period $i$ is contained in some periodic orbits of $\emb$ of period $k$ and $D_i^1\slash i$ is equal to or greater than $D_k^1\slash k$. Therefore we have 
\[
D_i^k\slash i=D_{g(k,i)i}^1\slash i\geq D_{ki}^1\slash l(ki), 
\]
where $l(k,i)$ is the least common multiplier, and we obtain 
\[
\#\eFix{(\Confmap{\emb}^k)}\geq\sum_{(n_1,\dots,n_m)\in A_m}\prod_{i=1}^m\left(
\begin{array}{c}
D_{ki}^1\slash l(k,i) \\
n_i
\end{array}
\right). 
\]
\end{Rem}

\section{Proof of the main theorem}\label{sec:dilatation_proof}
In this section we conclude the proof of main theorem. We denote by $\dil$ the dilatation of a pseudo-Anosov braid $\mc$. 


\begin{Prop}\label{Prop:lower_bound} \emph{For any pseudo-Anosov braid $\mc\in B_n$, the inequalities }
\[
\begin{array}{l}
\Growth{k}\norm{\tracek{k}}\geq\dil^m \\
\Growth{m}\norm{\trace}\geq\dil
\end{array}
\]
\emph{hold. }
\end{Prop}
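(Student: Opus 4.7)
The plan is to chain Lemma~\ref{Lem:Key_lemma} with the combinatorial inequality recorded in Remark~\ref{rem:inequality_of_Dik}, and to combine the result with the asymptotic Nielsen theory of Section~\ref{sec:FPT}, in particular Ivanov's Theorem~\ref{Th:entropy_vs_asymp}. The idea is to extract one judiciously chosen term from the sum indexed by $A_m$ in each of the two inequalities, and then to read off the resulting growth rate from the growth rate of $D_n^1$, the number of essential period-$n$ points of the embedding $\emb$.

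Concretely, Lemma~\ref{Lem:Key_lemma} and Remark~\ref{rem:inequality_of_Dik} together yield
\[
\norm{\tracek{k}}\geq\#\eFix(\Confmap{\emb}^k)\geq\sum_{(n_1,\dots,n_m)\in A_m}\prod_{i=1}^m\binom{D_{ki}^1/l(k,i)}{n_i}.
\]
For the first asserted inequality I would keep only the single summand indexed by $(n_1,\dots,n_m)=(m,0,\dots,0)$; using $l(k,1)=k$ this gives the lower bound $\binom{D_k^1/k}{m}$. For the second asserted inequality (which is the case $k=1$), I would instead keep the summand indexed by $(0,\dots,0,1)$; using $l(1,m)=m$ this gives the lower bound $D_m^1/m$.

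The next step is to bound $\Growth{n}D_n^1$ from below. Because $\emb$ lies in the isotopy class of the pseudo-Anosov representative of $\mc$ and the asymptotic Nielsen number is a homotopy invariant, Ivanov's Theorem~\ref{Th:entropy_vs_asymp} together with~(\ref{eq:dilatation_and_Nielsen}) gives $N^\infty(\emb)=\dil$. The Property of Essential Irreducibility discussed in Remark~\ref{rem:PBI} then upgrades this to $NI^\infty(\emb)=\dil$. Since each primary essential $n$-orbit class of $\emb$ contains exactly $n$ essential points of least period $n$, one has $D_n^1\geq n\cdot NI_\Gamma(\emb^n)$, and therefore $\Growth{n}D_n^1\geq\dil$.

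Assembling the pieces, $\binom{D_k^1/k}{m}$ behaves like $(D_k^1/k)^m/m!$ as $k\to\infty$, and the bound on $D_k^1$ then forces $\Growth{k}\norm{\tracek{k}}\geq\dil^m$; likewise $D_m^1/m$ grows at least like $\dil^m/m$ as $m\to\infty$, yielding $\Growth{m}\norm{\trace}\geq\dil$. The main obstacle I foresee is justifying the irreducibility property for $\emb$ in the third step: Remark~\ref{rem:PBI} only explicitly records it for homeomorphisms of $\Sn$, whereas $\emb$ is only an embedding of $\Y$. The natural workaround is to invoke homotopy invariance of $NI^\infty$ and $N^\infty$ to pull the property back from a pseudo-Anosov homeomorphism in the isotopy class, or to establish it directly from the Markov-like structure of the foliated-rectangle decomposition of $\Y$.
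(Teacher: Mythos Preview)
Your proposal is correct and follows essentially the same route as the paper's own proof: both chain Lemma~\ref{Lem:Key_lemma} with Remark~\ref{rem:inequality_of_Dik}, extract a single term from the sum over $A_m$, bound it below by an irreducible Nielsen number, and finish via the homotopy invariance of $NI^\infty$ together with~\eqref{eq:dilatation_and_Nielsen}. The only cosmetic difference is that the paper selects the summand $(0,\dots,0,1)$ for \emph{both} inequalities (obtaining $D_m^k/m\geq g(k,m)\,NI_{\Gamma_{\mc,1}}(\emb^{km})$ and then letting $k\to\infty$ or $m\to\infty$), whereas you use $(m,0,\dots,0)$ for the growth in $k$; your concern about the Property of Essential Irreducibility for the embedding $\emb$ is resolved exactly as you suggest, by passing to the homotopic homeomorphism $\representative$.
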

\begin{proof}
We recall that $NI_{\Gamma_{\mc^k,1}}((g^k)^i)$ defined in Section \ref{sec:periodic} is a lower bound for the number of primary $i$-orbits of $g^k$. In other words, we have the inequality $D_i^k\slash i\geq NI_{\Gamma_{\mc^k,1}}(\emb^{ki})$. When we use this inequality and Remark \ref{rem:inequality_of_Dik}, and consider the case $(n_1,\dots,n_m)=(0,\dots,0,1)$, we obtain the inequality 
\[
\begin{array}{rl}
\norm{\tr_{\torusfund}\rep(\mc^k)} & \!\!\!\!\displaystyle{\geq\#\eFix{(\Confmap{\emb}^k)}} \\
                                                 & \!\!\!\!\displaystyle{=\sum_{(n_1,\dots,n_m)\in A_m}\prod_{i=1}^m}\left(
\begin{array}{c}
D_i^k\slash i \\
n_i
\end{array}
\right) \\
                                                 & \!\!\!\!\displaystyle{\geq\frac{D_m^k}{m}=\frac{D_{km}^1}{l(k,m)}} \\
                                                 & \!\!\!\!\displaystyle{\geq g(k,m)NI_{\Gamma_{\mc,1}}(g^{km})}. 
\end{array}
\]
Since $g$ is homotopic to $\representative$, we obtain 
\[
\begin{array}{l}
\displaystyle{\Growth{k}\norm{\tr_{\torusfund}\rep(\mc^k)}\geq\Growth{k}g(k,m)NI_{\Gamma_{\mc,1}}(g^{km})=\dil^m}, \\
\displaystyle{\Growth{m}\norm{\tr_{\torusfund}\rep(\mc)}\geq\Growth{m}NI_{\Gamma_{\mc,1}}(g^m)=\dil}. 
\end{array}
\]
\end{proof}

\begin{Prop}\label{Prop:upper_bound_for_k} \emph{For any pseudo-Anosov braid $\mc\in B_n$, the inequality }
\[
\Growth{k}\tr\norm{\rep(\mc^k)}\leq\dil^m
\]
\emph{holds. }
\end{Prop}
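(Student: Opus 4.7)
The plan is to sharpen the computation of Lemma \ref{Lem:Key_lemma} (which bounded $\norm{\tracek{k}}$) to the finer quantity $\tr\norm{\rep(\mc^k)}$. I would repeat the argument in the proof of Lemma \ref{Lem:Key_lemma} at the level of each diagonal entry, \emph{before} projecting to conjugacy classes. Expanding $d_{\mu\mu}^{(\mc^k)}=\tau(\IP{\homology{\Conflift{S}_\mu}}{\Conflift{\mc^k}_*(\homology{\Conflift{F}_\mu})})$ via the intersection pairing, every nonzero monomial of $c_{\mu\mu}^{(\mc^k)}=\mc^k d_{\mu\mu}^{(\mc^k)}$ is indexed by a component $W_\mu^j\subset V_\mu\cap(\Confmap{\emb^k})^{-1}(V_\mu)$ with integer coefficient $\pm\ind{\Confmap{\emb^k}}{x_j}$, where $x_j$ is the unique hyperbolic fixed point on that component. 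Since $\norm{\cdot}$ is the $\ell^1$-norm of coefficients in $\IntertwiningR$, we get $\norm{c_{\mu\mu}^{(\mc^k)}}\leq\sum_{x_j}|\ind{\Confmap{\emb^k}}{x_j}|$; summing over $\mu$ and applying the Property of Bounded Index yields
\[
\tr\norm{\rep(\mc^k)}\leq B\cdot\#\eFix(\Confmap{\emb^k})
\]
for a constant $B$ independent of $k$.

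Next I would combine this with the combinatorial identity
\[
\#\eFix(\Confmap{\emb^k})=\sum_{(n_1,\dots,n_m)\in A_m}\prod_{i=1}^m\binom{D_i^k/i}{n_i}
\]
proved in the paragraph preceding Proposition \ref{Prop:lower_bound}. Every essential $\emb^k$-periodic point of minimal period $i$ is fixed by $\emb^{ki}$, so $D_i^k\leq\#\eFix(\emb^{ki})$. Since $\emb$ is homotopic to a pseudo-Anosov representative of dilatation $\dil$, equation \eqref{eq:dilatation_and_Nielsen} together with the Property of Bounded Index gives $\Growth{j}\#\eFix(\emb^j)\leq\dil$, whence $\Growth{k}D_i^k\leq\dil^i$ for each fixed $i$. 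Because $\binom{D_i^k/i}{n_i}$ is polynomial of degree $n_i$ in $D_i^k$,
\[
\Growth{k}\prod_{i=1}^m\binom{D_i^k/i}{n_i}\leq\prod_{i=1}^m\dil^{in_i}=\dil^{\sum_i in_i}=\dil^m
\]
holds for every $(n_i)\in A_m$. As $A_m$ is finite, the same bound passes to the sum, giving $\Growth{k}\#\eFix(\Confmap{\emb^k})\leq\dil^m$ and hence $\Growth{k}\tr\norm{\rep(\mc^k)}\leq\dil^m$.

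The main obstacle is the growth estimate $\Growth{k}D_i^k\leq\dil^i$, which requires bridging essential periodic points of $\emb^{ki}$, essential fixed point classes $N_\Gamma(\emb^{ki})$, and Ivanov's asymptotic Nielsen number $N^\infty(\emb)=\dil$ from Theorem \ref{Th:entropy_vs_asymp}. The bridge succeeds because each fixed point class of $\Confmap{\emb^j}$ on each component consists of a single hyperbolic fixed point (as shown in the proof of Lemma \ref{Lem:Key_lemma}), so essential fixed points and essential fixed point classes grow at the same rate (up to the Bounded Index constant), and the homotopy invariance of $N^\infty$ delivers the desired bound.
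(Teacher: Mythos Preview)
Your argument is correct and takes a genuinely different route from the paper's. The paper bounds every matrix entry $\norm{c_{\mu\nu}^{(\mc^k)}}$ by a geometric count of arc intersections $K_{ij}^k$, packages these into $A^k=\sum_{i,j}K_{ij}^k$, and then controls $\Growth{k}A^k$ through the Adler--Konheim--McAndrew open-cover definition of entropy via an auxiliary Lemma showing each element of $\bigvee_{p=0}^{k-1}\emb^{-p}(\alpha)$ contains at most one intersection. You instead stay entirely inside the Nielsen-theoretic framework already assembled: bound the diagonal entries by $\#\Fix(\Confmap{\emb^k})$, invoke the periodic-orbit decomposition $\sum_{(n_i)\in A_m}\prod_i\binom{D_i^k/i}{n_i}$, and reduce to $\Growth{k}D_i^k\le\dil^i$ via the singleton-class property and the homotopy invariance of $N^\infty$. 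Your route reuses more of what the paper has already set up and avoids the separate open-cover lemma; the paper's route is more geometric and yields a uniform bound on \emph{all} entries, not only the diagonal ones.

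One imprecision worth cleaning up: your two invocations of the Property of Bounded Index are not doing the work you attribute to them. In the first step you already have $|\ind{\Confmap{\emb^k}}{x_j}|=1$ for every hyperbolic fixed point, so $\tr\norm{\rep(\mc^k)}\le\#\Fix(\Confmap{\emb^k})$ with constant $B=1$; no appeal to PBI is needed. In the second step, the crucial fact is not PBI (which bounds the \emph{index} of each class) but that each fixed point class of $\emb^{j}$ is a singleton, so that $\#\Fix(\emb^{j})=N_\Gamma(\emb^{j})$; this is exactly the assertion you cite from the proof of Lemma~\ref{Lem:Key_lemma} (in the case $m=1$), and without it one only has $\#\Fix(\emb^{j})\ge N_\Gamma(\emb^{j})$, which goes the wrong way. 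Once that is stated cleanly, your chain $D_i^k\le\#\Fix(\emb^{ki})=N_\Gamma(\emb^{ki})$ together with $N^\infty(\emb)=N^\infty(\representative)=\dil$ gives $\Growth{k}D_i^k\le\dil^i$, and the finiteness of $A_m$ finishes the argument.
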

\begin{proof}
By (\ref{eq:mu,nu_entry}), the $(\mu,\nu)$-entry of $\norm{\rep(\mc^k)}$ is $\norm{\IP{\homology{\Conflift{S}_\nu}}{\Conflift{\beta}_*^k\left(\homology{\Conflift{F}_\mu}\right)}}$. We notice that $\norm{\IP{\homology{\Conflift{S}_\nu}}{\Conflift{\mc}_*^k\left(\homology{\Conflift{F}_\mu}\right)}}$ is equal to or less than the number of intersections of $S_\nu$ and $\Confmap{\emb}^k(F_\mu)$. We define $K_{ij}^k$ to be the number of intersections of $\alpha_i^1$ and $\emb^k(N_j)$ and set $A^k=\sum_{i,j}K_{ij}^k$. We set 
\[
\rhomatrix=\left\{\rho\in M(\Euler,\mathbb{N})\relmiddle|\sum_{i=1}^{\Euler}\rho_{ij}=\nu_j, \sum_{j=1}^{\Euler}\rho_{ij}=\mu_i\right\}. 
\]
For every $i, j$ and $\rho\in\rhomatrix$, we can choose $\rho_{ij}$ paths from $\mu_i$ forks and choose one intersection from $K_{ij}$ intersections for each forks; see Figure \ref{fig:intersection}. 
Therefore we obtain 

\begin{figure}[tbp]
\begin{center}
\includegraphics[width=\textwidth]{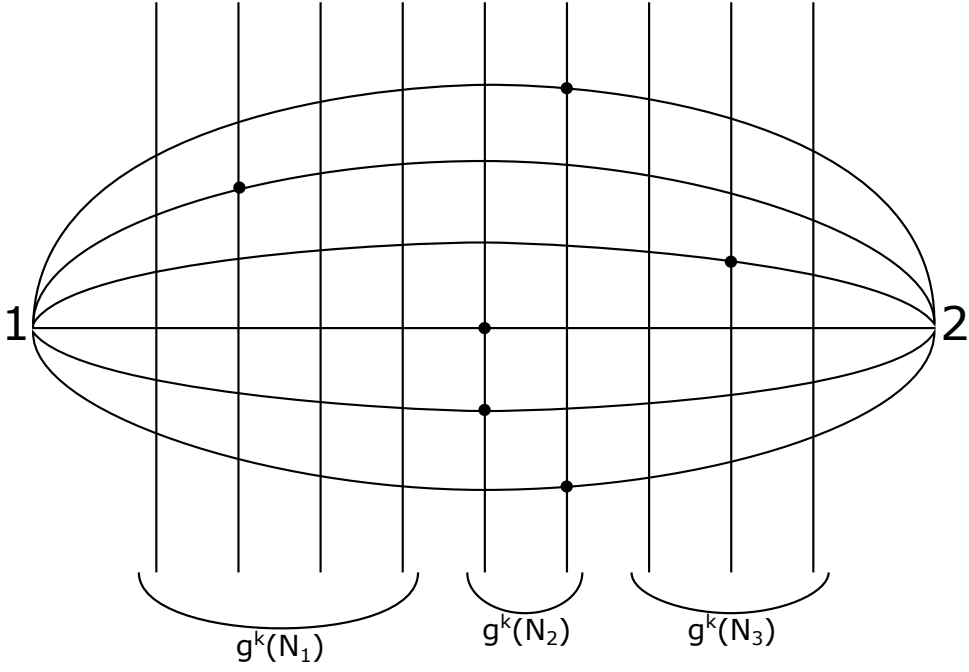}
\end{center}
\caption{The case when $\mu_1=6,K_{11}^k=4,K_{12}^k=2,K_{13}^k=3,\rho_{11}=1,\rho_{12}=4,\rho_{13}=1$}
\label{fig:intersection}
\end{figure}

\[
\begin{array}{rcl}
\norm{\IP{\homology{\Conflift{S}_\nu}}{\Conflift{\mc}_*^k\left(\homology{\Conflift{F}_\mu}\right)}} & \leq & \displaystyle{\sum_{\rho\in\rhomatrix}\prod_{i=1}^{\Euler}\mu_i!\prod_{j=1}^{\Euler}\frac{1}{\rho_{ij}!}\left(K_{ij}^k\right)^{\rho_{ij}}} \\
                                                                       & \leq & \displaystyle{\left(\prod_{i=1}^{\Euler}\mu_i\right)\sum_{\rho\in\rhomatrix}\prod_{i=1}^{\Euler}\prod_{j=1}^{\Euler}\frac{1}{\rho_{ij}!}\left(A^k\right)^{\rho_{ij}}} \\
                                                                       & = & \displaystyle{\left(\prod_{i=1}^{\Euler}\mu_i\right)\left(A^k\right)^m\sum_{\rho\in\rhomatrix}\prod_{i=1}^{\Euler}\prod_{j=1}^{\Euler}\frac{1}{\rho_{ij}!}} 
\end{array}
\]
and 
\[
\Growth{k}\norm{\IP{\homology{\Conflift{S}_\nu}}{\Conflift{\mc}_*^k\left(\homology{\Conflift{F}_\mu}\right)}}\leq\left(\Growth{k}A^k\right)^m. 
\]
It suffices to show $\Growth{k}A^k\leq\dil$. We set 
\[
U_i\cap\emb^{-1}(U_j)=\coprod_{l=1}^{K_{ij}^1}V_{ijl} 
\]
and take an open cover $\alpha=\{V_{ijk}\mid1\leq i,j\leq\Euler, 1\leq k\leq K_{ij}^1\}\cup A^\prime$ of the compact set $\Y$, where $A^\prime$ does not contain any intersections of $\emb^{-1}(\alpha_i)$ and $N_j$. 


\begin{Lem}\label{Lem:open_cover} \emph{Each element of $\bigvee_{p=0}^{k-1}\emb^{-p}(\alpha)$ contains at most one intersection of $\emb^{-k}(\alpha_j)$ and $N_i$. }
\end{Lem}
\begin{proof}
Every nonempty element of $\bigvee_{p=0}^{k-1}\emb^{-p}(\alpha)$ can be written as 
\[
B=V_{i_0i_1l_1}\cap\dots\cap\emb^{-k+1}(V_{i_{k-1} i_kl_k}) 
\]
with $i_0=i$ and $i_k=j$. By the definition of $\phi$ and $\overline{\phi}$, $\emb^k|_B:B\rightarrow U_j$ is bijective. Therefore $(\emb^k|_B)^{-1}(\alpha_j)$ is one leaf of $U_i$ and there exists only one intersection of $\emb^{-k}(\alpha_j)$ and $N_i$. 
\end{proof}
It follows from Lemma \ref{Lem:open_cover} that 
\[
A^\ell=\sum_{i,j}K_{ij}^\ell\leq N\left(\bigvee_{i=0}^{\ell-1}\emb^{-i}(\alpha)\right) 
\]
and by \eqref{eq:dilatation_and_Nielsen}, the growth rate of $N\left(\bigvee_{i=0}^{\ell-1}\emb^{-i}(\alpha)\right)$ is equal to or less than the dilatation of $\mc$. Therefore the proposition follows. 
\end{proof}

\begin{Prop}\label{Prop:upper_bound_for_m} \emph{For any pseudo-Anosov braid $\mc\in B_n$, the inequality }
\[
\Growth{m}\norm{\tr_{\torusfund}\rep(\mc)}\leq\dil
\]
\emph{holds. }
\end{Prop}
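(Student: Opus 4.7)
The plan is to apply Lemma \ref{Lem:Key_lemma} to reduce $\norm{\trace}$ to a count of essential fixed points of $\Confmap{\emb}$, then recognize that count as the $m$-th coefficient of an explicit generating function built from the essential periodic data of $\emb$, and finally use Ivanov's theorem to bound the radius of convergence of that generating function.

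Setting $k = 1$ in Lemma \ref{Lem:Key_lemma} gives $\norm{\trace} \leq B \cdot \#\eFix(\Confmap{\emb})$. I would first argue that $B$ may be taken independent of $m$: since $\emb$ is homotopic to a pseudo-Anosov homeomorphism of $\Sn$, the index of every essential fixed point of $\emb^i$ is $\pm 1$, and a fixed point of $\Confmap{\emb}$ on $\Conf$ decomposes as a disjoint union of $\emb$-orbits whose local index contributions multiply to give total index of absolute value $1$. This supplies the Property of Bounded Index needed in Remark \ref{rem:PBI} uniformly in $m$.

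By the counting derived at the end of Section \ref{sec:trace} specialized to $k = 1$,
\[
\#\eFix(\Confmap{\emb}) = \sum_{(n_1,\dots,n_m)\in A_m}\prod_{i=1}^m \binom{D_i^1/i}{n_i},
\]
which I would recognize as the coefficient of $s^m$ in the generating function
\[
F(s) = \prod_{i\geq 1}(1+s^i)^{D_i^1/i}
\]
obtained by grouping $\emb$-invariant $m$-subsets by their cycle decomposition. The task reduces to showing that the radius of convergence of $F$ is at least $1/\dil$. For this, bound $D_i^1 \leq \#\eFix(\emb^i)$; the Bounded Index property for $\emb$ on $\Sn$ makes the latter comparable up to a multiplicative constant to $N_\Gamma(\emb^i)$, and equation \eqref{eq:dilatation_and_Nielsen} gives $\Growth{i}N_\Gamma(\emb^i)=\dil$. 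Thus for every $\epsilon>0$ there exists $C_\epsilon>0$ with $D_i^1 \leq C_\epsilon(\dil+\epsilon)^i$. Using $|\log(1+s^i)| \leq |s|^i$, the series $\log F(s) = \sum_i (D_i^1/i)\log(1+s^i)$ then converges absolutely whenever $|s|<1/(\dil+\epsilon)$, so $F$ is analytic on that disk. Cauchy--Hadamard yields $\limsup_m [s^m]F(s)^{1/m} \leq \dil+\epsilon$, and letting $\epsilon\to 0$ gives $\Growth{m}\#\eFix(\Confmap{\emb}) \leq \dil$. Combined with the first step, this proves the proposition.

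The main obstacle is the very first step: verifying that $B$ in Lemma \ref{Lem:Key_lemma} does not grow exponentially in $m$, since the lemma itself is stated for a single fixed value of $m$, while our estimate must be uniform as $m\to\infty$. The pseudo-Anosov hyperbolicity on $\Sn$, together with the product form of the fixed point index on $\Conf$, is what keeps this dependence trivial; articulating it carefully is the delicate part. Everything else reduces to a standard generating function computation and an invocation of Ivanov's theorem.
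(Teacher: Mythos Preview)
Your argument is correct and takes a genuinely different route from the paper's. Both proofs begin identically: apply Lemma~\ref{Lem:Key_lemma} with $k=1$ and invoke the orbit-decomposition count
\[
\#\eFix(\Confmap{\emb})=\sum_{(n_1,\dots,n_m)\in A_m}\prod_{i=1}^m\binom{D_i^1/i}{n_i}.
\]
From here the paper bounds each summand by $(a_m\dil)^m$ (using the $m=1$ case of Proposition~\ref{Prop:upper_bound_for_k} to control $D_i^1/i$) and then multiplies by the number $S_m=|A_m|$ of summands; the bulk of the paper's proof is the separate combinatorial Lemma showing $S_m^{1/m}\to 1$. Your generating-function repackaging $\#\eFix(\Confmap{\emb})=[s^m]\prod_{i\ge 1}(1+s^i)^{D_i^1/i}$ sidesteps that lemma entirely: once $D_i^1\le C_\epsilon(\dil+\epsilon)^i$ is known, convergence of $\log F$ on $|s|<1/(\dil+\epsilon)$ and Cauchy--Hadamard finish the job in a few lines. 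This is cleaner and also decouples the argument from Proposition~\ref{Prop:upper_bound_for_k}, since you feed in the growth of $D_i^1$ directly from Ivanov's theorem rather than from the $k$-growth statement already proved.

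Two remarks on execution. First, your appeal to the ``Bounded Index property'' to pass from $\#\eFix(\emb^i)$ to $N_\Gamma(\emb^i)$ is not quite the right mechanism: bounded index compares $\norm{L_\Gamma}$ to $N_\Gamma$, not the raw fixed-point count. What actually does the work here is the specific hyperbolic structure of $\emb$ (the same fact used inside the proof of Lemma~\ref{Lem:Key_lemma}): every fixed point of $\emb^i$ sits alone in its Nielsen class with index $\pm 1$, so $\#\eFix(\emb^i)=N_\Gamma(\emb^i)$ on the nose. Second, you are right to flag the uniformity of $B$ in $m$ as the delicate point---the paper uses this silently when it writes $\limsup_m(BS_m)^{1/m}=1$. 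The resolution is again the hyperbolicity of $\Confmap{\emb}$ on each component $W_\mu^j$: every fixed point of $\Confmap{\emb}$ has index $\pm1$ and is alone in its class, so in fact $\norm{\trace}=\#\eFix(\Confmap{\emb})$ exactly, and one may take $B=1$ for all $m$.
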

\begin{proof}
By Lemma \ref{Lem:Key_lemma}, $\norm{\tracek{k}}$ is equal to or greater than the number of essential fixed points of $\Confmap{\emb}^k$.  
For $m=1$, we notice that $\Confmap{\emb}^k$ is $g^k$. Therefore $\norm{\tr_{\Gamma_{\beta^k,1}}\zeta_{n,1}(\beta^k)}$ is equal to or greater than the number of essential periodic points of $\emb$ whose period is a divisor of $k$. In particular, we obtain $\norm{\tr_{\Gamma_{\beta^k,1}}\zeta_{n,1}(\beta^k)}\geq D_k^1\slash k$. Therefore we obtain 
\[
\begin{array}{rcl}
\norm{\tr_{\torusfund}\rep(\mc)} & \leq & \displaystyle{B\#\eFix{\Confmap{\emb}}=B\sum_{(n_1,\dots,n_m)\in A_m}\prod_{i=1}^m}\left(
\begin{array}{c}
D_i^1\slash i \\
n_i
\end{array}
\right) \\
                                             & \leq & \displaystyle{B\sum_{(n_1,\dots,n_m)\in A_m}\prod_{i=1}^m}\left(
\begin{array}{c}
\norm{\tr_{\Gamma_{\beta^i,1}}\zeta_{n,1}(\beta^i)} \\
n_i
\end{array}
\right). 
\end{array}
\]
By Proposition \ref{Prop:upper_bound_for_k}, there exists a monotonically increasing sequence $\{a_i\}$ of real numbers such that 
\[
\norm{\tr_{\Gamma_{\mc,1}}\zeta_{n,1}(\mc^i)}\leq (a_i\dil)^i\:\mathrm{and}\:\limsup_{i\to\infty}a_i=1 
\]
holds. Therefore we obtain 
\[
\begin{array}{rcl}
\norm{\tr_{\torusfund}\rep(\mc)} & \leq & \displaystyle{B\sum_{(n_1,\dots,n_m)\in A_m}\prod_{i=1}^m}(a_i\dil)^{in_i} \\
                                              & \leq & \displaystyle{B(a_m\dil)^mS_m}, 
\end{array}
\]
where $S_m$ is the number of elements of $A_m$. 
\begin{Lem}\label{Lem:summand_number} \emph{The equality $\lim_{m\to\infty}S_m^{1\slash m}=1$ holds. }
\end{Lem}
\begin{proof}
We suppose that $m-c_m>c_md_m$, where 
\[
c_m=4(\Gauss{\sqrt[4]{m}}+1)^2,\:d_m=4(\Gauss{\sqrt[4]{m}}+2) 
\]
and $\Gauss{x}$ is the floor function. Let $C_m$ be the subset of $A_m$ satisfying the following condition 
\[
\sum_{i=1}^{c_m}n_i=d_m\:\mathrm{and}\:n_{m-\sum_{i=1}^{c_m}in_i}=1. 
\]
Then $C_m$ is in one-to-one correspondence with the $d_m$-combinations with repetition from $c_m$ elements. Therefore we obtain the inequality 
\[
\begin{array}{rcl}
S_m & \geq & \left(
\begin{array}{c}
c_m+d_m-1 \\
d_m
\end{array}
\right)=\left(
\begin{array}{c}
4(\Gauss{\sqrt[4]{m}}+2)(\Gauss{\sqrt[4]{m}}+1) \\
4(\Gauss{\sqrt[4]{m}}+2)
\end{array}
\right) \\
      & = & \displaystyle{\frac{4(\Gauss{\sqrt[4]{m}}+2)(\Gauss{\sqrt[4]{m}}+1)}{4(\Gauss{\sqrt[4]{m}}+2)}\times\dots\times\frac{4(\Gauss{\sqrt[4]{m}}+1)^2}{1}} \\
      & \geq & \displaystyle{(\Gauss{\sqrt[4]{m}}+1)^{4(\Gauss{\sqrt[4]{m}}+2)}\geq\sqrt[4]{m}^{4(\Gauss{\sqrt[4]{m}}+2)}=m^{\Gauss{\sqrt[4]{m}}+2}}. 
\end{array}
\]
We set 
\[
A_{m,k}=\{(n_1,\dots,n_m)\in A_m\mid\:\max\{i\mid n_i\neq0\}=k\}. 
\]
and let $S_{m,k}$ be the number of the elements of $A_{m,k}$. Then clearly 
\[
S_m=\sum_{k=1}^mS_{m,k} 
\]
holds and the recursion formula 
\begin{equation}\label{eq:Sm}
S_{m+1,k+1}=S_{m,k}+S_{m-k,k+1} 
\end{equation}
follows from the equality $A_{m,k}=\coprod_{j=1}^kA_{m-k,j}$. Moreover, $S_{m,k}$ is less than the number of how to put $m$ balls in distinct $k$ boxes, which is $m^k$. 

We assume that $\max_kS_{m,k}=S_{m,k_0}$. Since $S_m\leq mS_{m,k_0}$ holds, we obtain 
\[
m^{k_0}\geq S_{m,k_0}\geq\frac1mS_m\geq m^{\sqrt[4]{m}} 
\]
and $k_0\geq\sqrt[4]{m}$. From (\ref{eq:Sm}), we obtain 
\[
S_{m,k_0}\leq S_{2(m-k_0),m-k_0}=S_{m-k_0}. 
\]
Since $S_m$ is monotonically increasing for $m$, we obtain 
\[
S_m\leq mS_{m,k_0}\leq mS_{m-k_0}\leq mS_{m-\sqrt[4]{m}}. 
\]
There exists a natural number $N$ such that the assumption holds for all $m\geq N$. We set $f(m)=m-\sqrt[4]{m}$ and $n_N(m)=\min\{i\mid f^i(m)\leq N\}$. Then we obtain $S_m\leq m^{n_N(m)}S_N$. We notice that if $x$ is larger than $(\sqrt[4]{m}-1)^4$, then $x-f(x)=\sqrt[4]{x}$ is larger than $\sqrt[4]{m}-1$. Therefore we obtain 
\[
f^{\Gauss{\sqrt[4]{m}^2-2\sqrt[4]{m}+2}+1}(m)\leq m-(\sqrt[4]{m}+(\sqrt[4]{m}-1)(\sqrt[4]{m}^2-2\sqrt[4]{m}+2))=(\sqrt[4]{m}-1)^4. 
\]
Therefore we obtain 
\[
n_N(m)\leq\sum_{k=1}^{\sqrt[4]{m}}\Gauss{4k^2-2k+2}+1\leq\sqrt[4]{m}(4\sqrt[4]{m}^2-2\sqrt[4]{m}+3)\leq4m^{3\slash4} 
\]
and 
\[
1<\sqrt[m]{S_m}\leq(m^{n_N(m)}S_{f^{n_N(m)}(m)})^{1\slash m}\leq\sqrt[m]{S_N}m^{4m^{-\frac14}}. 
\]
Since the limit $\lim_{m\to\infty}\sqrt[m]{S_N}m^{4\slash\sqrt[4]{m}}$ equals 1, squeeze theorem leads to the conclusion $\lim_{m\to\infty}S_m^{1\slash m}=1$. 
\end{proof}
By this lemma, we obtain 
\[
\limsup_{m\to\infty}\norm{\tr_{\torusfund}\rep(\mc)}^{1\slash m}\leq\limsup_{m\to\infty}(BS_m)^{1\slash m}a_m\dil=\dil. 
\]
\end{proof}

\noindent
{\it Proof of Theorem \ref{Th:main_theorem}. }
Since we have the inequality $\tr(\norm{A})\geq\norm{\tr A}$ for any matrix $A$ with coefficients in Laurent polynomial ring, we obtain 
\[
\dil^m\leq\Growth{k}\norm{\tracek{k}}\leq\Growth{k}\tr\norm{\rep(\mc^k)}\leq\dil^m 
\]
by Proposition \ref{Prop:lower_bound} and Proposition \ref{Prop:upper_bound_for_k}. Therefore we have 
\[
\Growth{k}\norm{\tracek{k}}=\Growth{k}\tr\norm{\rep(\mc^k)}=\dil^m. 
\]
We have 
\[
\dil\leq\Growth{m}\norm{\trace}\leq\dil 
\]
by Proposition \ref{Prop:lower_bound} and Proposition \ref{Prop:upper_bound_for_m} and we have $\Growth{m}\norm{\trace}=\dil$. $\Box$

\section{Homological representation of braid groups}\label{sec:homological_representation}

\subsection{Homological representation of braid groups}\label{sec:review}


In \cite{Law90} Lawrence construct a monodromy representation of braid groups. We review the representation. We take a homomorphism 
\[
\Burau:{\bf B}_{n,1}(\X)\cong\langle\sigma_1,\dots,\sigma_{n-1},\sigma_n^2\rangle\rightarrow\mathbb{Z} 
\]
defined by $\Burau(\sigma_i)=0$ for all $1\leq i<n$ and $\Burau(\sigma_n^2)=1$. Let $p_B:\lift{\Sn}^B\rightarrow\Sn$ be the covering corresponding to $\Ker\Burau$ and fix $\lift{d}^B\in p_B^{-1}(d_1)$. For an $n$-braid $\mc$, we take a representative $f$. Let 
\[
\lift{f}^B:(\lift{\Sn}^B,\lift{d}^B)\rightarrow(\lift{\Sn}^B,\lift{d}^B) 
\]
be the lift of $f$. Then $\lift{f}^B$ acts on $H_1(\lift{\Sn}^B,\partial\lift{\Sn}^B)$ as $\mathbb{Z}[\mathbb{Z}]$-homomorphism. The linear representation $B$ defined by $B(\mc)=\lift{f}^B_*$ is called the \emph{reduced Burau representation}. Let $t$ denote the generator of covering transformation of $\lift{\Sn}^B$ corresponding to $1\in\mathbb{Z}$. Then the ring $\mathbb{Z}[\mathbb{Z}]$ is isomorphic to the Laurent polynomial ring $\mathbb{Z}[t^{\pm1}]$ and $B(\mc)$ can be regarded as a matrix with coefficients in the Laurent polynomial ring $\mathbb{Z}[t^{\pm1}]$. Similarly for $m\geq2$, we take a homomorphism 
\[
\LKB:\Braid\cong\langle\sigma_1,\dots,\sigma_{n-1},\sigma_n^2,\sigma_{n+1},\dots,\sigma_{n+m-1}\rangle\rightarrow\mathbb{Z}\oplus\mathbb{Z} 
\]
defined by $\LKB(\sigma_i)=0\oplus0$ for all $1\leq i<n$, $\LKB(\sigma_n^2)=1\oplus0$ and $\LKB(\sigma_{n+j})=0\oplus1$ for all $1\leq j<m$. Let $p_{\mathit{LKB}}:\lift{\C}^{\mathit{LKB}}_{n,m}(\X)\rightarrow\Conf$ be the covering corresponding to $\Ker\LKB$ and fix $\lift{c}^{\mathit{LKB}}\in p_{\mathit{LKB}}^{-1}(c)$. For $\mc\in B_n$, we take a representative $f$. Let 
\[
\lift{f}^{\mathit{LKB}}:(\lift{\C}^{\mathit{LKB}}_{n,m}(\X),\lift{c}^{\mathit{LKB}})\rightarrow(\lift{\C}^{\mathit{LKB}}_{n,m}(\X),\lift{c}^{\mathit{LKB}}) 
\]
be the lift of $\Confmap{f}$. Then $\lift{f}^{\mathit{LKB}}$ acts on $H_2(\lift{B}^{\mathit{LKB}}_{n,m}(\X))$ as an $\mathbb{Z}[\mathbb{Z}\oplus\mathbb{Z}]$-homomorphism. The linear representation $\mathit{LKB}_m$ defined by $\mathit{LKB}_m(\mc)=\lift{f}^{\mathit{LKB}}_*$ is called the \emph{Lawrence-Krammer-Bigelow representations}. Let $q$ and $t$ denote the generator of covering transformation of $\lift{\C}^{\mathit{LKB}}_{n,m}(\X)$ corresponding to $1\oplus0\in\mathbb{Z}\oplus\mathbb{Z}$ and $0\oplus1\in\mathbb{Z}\oplus\mathbb{Z}$ respectively. Then the ring $\mathbb{Z}[\mathbb{Z}\oplus\mathbb{Z}]$ is isomorphic to the Laurent polynomial ring $\mathbb{Z}[q^{\pm1},t^{\pm1}]$ and $\mathit{LKB}_m(\mc)$ can be regarded as a matrix with coefficients in the 2-variable Laurent polynomial ring $\mathbb{Z}[q^{\pm1},t^{\pm1}]$. 

The homological representation of braid groups has been also intensively studied. The Lawrence-Krammer-Bigelow representations of the braid groups were studied by Lawrence \cite{Law90} in relation with Hecke algebra representations of the braid groups. In \cite{geometric_original}, \cite{algebraic_original4} and \cite{algebraic_original}, Bigelow and Krammer showed the faithfulness of the Lawrence-Krammer-Bigelow representation for $m=2$ independently. 

In \cite{Fri86}, Fried showed how to estimate the entropy of a pseudo-Anosov braid by using the Burau matrix $B(t)$ of a pseudo-Anosov braid. 
In \cite{Kol89}, Kolev proved the same estimation directly with different methods. The following theorem is the estimate and this estimate is called the \emph{Burau estimate}. 

\begin{Th}\label{Th:Burau_estimate_only}(Fried \cite{Fri86}, Kolev \cite{Kol89}) \emph{Let $f$ be a homeomorphism of $\X$ fixing $\Punc$ setwise and $\mc$ be an $n$-braid represented by $f$. Then the topological entropy of $f$ is equal to or greater than the logarithm of the spectral radius of the Burau matrix $B(t)$ of $\mc$ after substituting a complex number of modulus 1 in place of $t$. 
}
\end{Th}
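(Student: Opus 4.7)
The plan is to derive the Burau estimate from the entrywise growth bounds for $\zeta_{n,1}(\mc^k)$ that are established inside the proof of Proposition \ref{Prop:upper_bound_for_k}. Since the inequality $h(f) \geq \log\rho(B(t))$ reduces, by homotopy invariance of the right-hand side and by the Nielsen--Thurston canonical decomposition, to the pseudo-Anosov case, I restrict to a pseudo-Anosov braid $\mc$; then $h(f) \geq \log\dil$ for every representative $f$ by \eqref{eq:dilatation_and_Nielsen}, so it suffices to prove $\rho(B(t)) \leq \dil$ for every $t \in \mathbb{C}$ with $|t| = 1$.

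The key observation is that the reduced Burau matrix $B(\mc)(t)$ is obtained from $\zeta_{n,1}(\mc)$ by applying, entrywise, the ring homomorphism $\mathbb{Z}[\IntertwiningBraidn{1}] \to \mathbb{Z}[t^{\pm1}]$ induced by $\Burau$ together with the identification $\mathbb{Z}[\mathbb{Z}] \cong \mathbb{Z}[t^{\pm1}]$; since $B$ is a representation, $B(\mc)(t)^k = B(\mc^k)(t)$. For $|t| = 1$ the evaluation map $\mathbb{Z}[t^{\pm1}] \to \mathbb{C}$ satisfies $|p(t)| \leq \|p\|$ by the triangle inequality, and hence
\[
|B(\mc^k)_{ij}(t)| \;\leq\; \|\zeta_{n,1}(\mc^k)_{ij}\|
\]
for every $(i,j)$.

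Combining this entrywise bound with the crude estimate $\rho(A) \leq \sum_{i,j}|A_{ij}|$, valid for any square complex matrix because the right-hand side dominates the $\ell^1$ operator norm, yields
\[
\rho(B(t))^k \;=\; \rho(B(\mc^k)(t)) \;\leq\; \sum_{i,j}\|\zeta_{n,1}(\mc^k)_{ij}\|.
\]
Although Theorem \ref{Th:main_theorem} is stated only for the trace, the proof of Proposition \ref{Prop:upper_bound_for_k} in fact bounds each individual entry $\|\IP{\homology{\Conflift{S}_\nu}}{\Conflift{\mc}_*^k(\homology{\Conflift{F}_\mu})}\|$ with growth rate at most $\dil^m$. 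Specialising to $m = 1$ and summing over the finite basis $\basis$ gives $\Growth{k}\sum_{i,j}\|\zeta_{n,1}(\mc^k)_{ij}\| \leq \dil$. Taking $k$-th roots and letting $k \to \infty$ then produces $\rho(B(t)) \leq \dil$, which is the desired inequality.

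The main obstacle I expect is the careful verification of the opening identification, namely that the reduced Burau representation agrees with the entrywise $\Burau$-reduction of $\zeta_{n,1}$. This is only asserted in the introduction; a rigorous argument would compare the universal cover $\univcov$ with the intermediate cover of $\Sn$ corresponding to $\Ker\Burau$ and check that the generating cycles $\homology{\Conflift{F}_\mu}$ descend to a basis underlying the classical definition of $B(\mc)$. Once this comparison is in hand, the remaining steps are a routine application of the entrywise bounds proved in the course of Proposition \ref{Prop:upper_bound_for_k}.
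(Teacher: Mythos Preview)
Your argument for the pseudo-Anosov case is correct, and it differs from the route the paper takes in Corollary~\ref{Cor:recover}. The paper works with traces throughout: it first invokes the full strength of Theorem~\ref{Th:main_theorem} to get $\Growth{k}\norm{\tracek{k}}=\dil^m$, then passes through Proposition~\ref{Prop:trace_estimate}, whose proof requires the Vandermonde-matrix and sine-product Lemmas~\ref{Lem:polynomial} and~\ref{Lem:sine}, to convert the growth of $\norm{\tr A^k}$ into $\sup_{x_i\in S^1}\sr A(x_1,\dots,x_n)$. You bypass all of this by going back to the \emph{entrywise} bound established inside the proof of Proposition~\ref{Prop:upper_bound_for_k} and combining it with the elementary inequality $\sr A\le\sum_{i,j}|A_{ij}|$; this is shorter and avoids the trigonometric interlude, at the cost of not producing the two-sided identity of Proposition~\ref{Prop:trace_estimate}. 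Note also that the paper, like you, only treats the pseudo-Anosov case (Corollary~\ref{Cor:recover} is stated for pseudo-Anosov $\mc$); your opening sentence about reducing the general statement via the Nielsen--Thurston decomposition is an additional claim that neither you nor the paper actually carries out, so if you keep it you should either supply the reduction or, as the paper does, restrict the conclusion to pseudo-Anosov braids. Finally, the identification of $(\Burau^\prime)_*(\zeta_{n,1})$ with the reduced Burau representation that you flag as the main obstacle is exactly what the paper asserts without proof in Section~\ref{sec:recover}, so on that point you and the paper are on equal footing.
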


If the inequality is an equality for $\eta=\eta_0$, then the Burau estimate is said to be \emph{sharp at $\eta_0$}. In \cite{Burau_estimate}, Band and Boyland determined a necessary and sufficient condition when the Burau estimate is sharp at the root of unity. 

\begin{Th}\label{Th:Burau_estimate}(Band and Boyland \cite{Burau_estimate}) \emph{For a pseudo-Anosov braid $\mc$, the Burau estimate is sharp at the root of unity $\eta_0$ only if $\eta_0=-1$. Furthermore, the Burau estimate is sharp at $-1$ if and only if the invariant foliations for a pseudo-Anosov map in the class represented by $\mc$ have odd order singularities at all punctures and all interior singularities are even order. }
\end{Th}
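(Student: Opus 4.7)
The plan is to exploit the interpretation of the reduced Burau matrix at a primitive $k$-th root of unity $\eta_0$ as the action of a representative $\representative$ of $\mc$ on the $\eta_0$-isotypic component of $H_1(\Sigma_k;\mathbb{C})$, where $\Sigma_k$ is the $k$-fold cyclic cover of $\Sn$ determined by the composition $\Burau:\pi_1(\Sn)\to\mathbb{Z}\twoheadrightarrow\mathbb{Z}/k\mathbb{Z}$, equivalently the puncturing of the $k$-fold cyclic branched cover of $\X$ over $\Punc$. Since $\representative$ lifts to a homeomorphism $\tilde{\representative}$ of $\Sigma_k$ and entropy is invariant under finite covers, $h(\tilde{\representative})=\log\dil$. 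By the uniqueness (up to scalar) of the invariant measured foliation of a pseudo-Anosov, the only possible $\dil$-eigenclass for $\tilde{\representative}_*$ on $H_1(\Sigma_k;\mathbb{C})$ is the Poincar\'e--Lefschetz dual of the transverse measure $\tilde{\mu}^u$ of the pullback foliation $p^*\mathcal{F}^u$, and it only exists when that pullback is orientable. Sharpness at $\eta_0$ therefore reduces to identifying the deck-eigenspace in which this single class lives.

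Because $\tilde{\mu}^u$ comes from a real transverse measure, its class is a $(+1)$-eigenvector of the deck group when $\mathcal{F}^u$ is already orientable on $\Sn$, and a $(-1)$-eigenvector when $p^*\mathcal{F}^u$ arises as the orientation double cover of a non-orientable $\mathcal{F}^u$. In either case the class factors through an at-most-order-two quotient of the deck group, so for $k\geq 3$ it cannot lie in any primitive $k$-th root of unity isotypic component. The remaining option $\eta_0=+1$ is excluded because the reduced Burau matrix at $t=1$ is the standard $(n-1)$-dimensional permutation representation of the symmetric group, of finite order and spectral radius $1<\dil$. This proves the first half: sharpness at a root of unity forces $\eta_0=-1$.

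For the second half one must determine when a $\dil$-eigenvector actually exists in the $(-1)$-eigenspace of the deck involution $\tau$ of $\Sigma_2$; that is, when $p^*\mathcal{F}^u$ is orientable on $\Sigma_2$ with $\tau$ reversing the orientation. This is a purely local computation at the singularities. At an interior $k$-prong singularity of $\mathcal{F}^u$ the covering is \'etale, producing two disjoint local copies, so $p^*\mathcal{F}^u$ is locally orientable iff $k$ is even, and in that case $\tau$ simply swaps the two local sheets and preserves the induced orientation. At a puncture the covering is ramified of degree two; a $k$-prong downstairs lifts to a $2k$-prong whose separatrices are cyclically permuted by $\tau$ via rotation by $k$ positions out of $2k$, and a consistent alternating-sign orientation transforms by $(-1)^k$ under this rotation, which equals $-1$ exactly when $k$ is odd. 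Combining the interior and puncture constraints gives precisely the stated criterion: odd order at every puncture and even order at every interior singularity. The main obstacle I expect is making the uniqueness input rigorous, namely ruling out spurious $\dil$-eigenclasses on $\Sigma_k$ distinct from the lift of $\tilde{\mu}^u$ (especially when the lifted map $\tilde{\representative}$ is only isotopic, not equal, to a pseudo-Anosov on $\Sigma_k$); the cleanest route is to lift an invariant train track for $\representative$ to $\Sigma_2$ and read both the eigenvector and its $\tau$-equivariance directly off the lifted Perron--Frobenius transition matrix.
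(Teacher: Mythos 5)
This is a result the paper \emph{cites} from Band and Boyland \cite{Burau_estimate} and does not prove, so there is no internal argument to compare against; I'll evaluate your sketch on its own and against the original reference.

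Your route is essentially the one Band and Boyland take: interpret the specialized reduced Burau matrix at a primitive $k$-th root of unity as the action on the corresponding character-isotypic summand of $H_1$ of the $k$-fold cyclic cover $\Sigma_k$, lift the pseudo-Anosov representative (preserving the dilatation), and observe that the unique homology class achieving the spectral radius $\dil$ is the Poincar\'e dual of the transverse measure of the lifted unstable foliation, which exists only when that lift is orientable. Since this class is real, the deck group can act on it only through a quotient of order at most two, which kills all primitive $k$-th roots of unity with $k\geq3$; excluding $t=1$ by the finite-order permutation representation leaves $\eta_0=-1$. That correctly reproduces the first clause, and your local branched-cover analysis at the punctures --- a $k$-prong lifting to a $2k$-prong with the involution rotating separatrices by $k$ slots and hence scaling the alternating orientation by $(-1)^k$ --- gives the ``odd at punctures'' condition.

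Two points deserve tightening. First, your phrasing at interior singularities is misleading: you assert that $\tau$ ``swaps the two local sheets and preserves the induced orientation,'' but since $\Sigma_2$ is connected, $\tau$ acts on a chosen orientation of $p^*\mathcal{F}^u$ by a single global sign; near a free orbit of $\tau$ there is no local constraint at all on that sign, and the only constraint the interior singularities impose is local orientability ($k$ even). If $\tau$ literally preserved orientation near such points, it would preserve it everywhere, putting the class in the $(+1)$-eigenspace and contradicting your own exclusion of $t=1$. The correct formulation is in terms of $W^1(\mathcal{F}^u)\in H^1(D_n\setminus\mathrm{sing};\mathbb{Z}/2)$: sharpness at $-1$ requires $W^1$ to equal the class defining the Burau double cover, which is $1$ on every puncture loop and $0$ on every loop about an interior singularity --- i.e.\ odd at punctures, even in the interior. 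Second, your flagged ``uniqueness'' input --- that the lift of the pseudo-Anosov is pseudo-Anosov with the same dilatation, that its leading homological eigenvalue (when realized) is simple, and that no spurious $\dil$-eigenclass appears --- is standard but must indeed be cited or proved; the Perron--Frobenius train-track argument you propose is a reasonable way to do it. With these repairs the proof is sound, and it mirrors the original argument in \cite{Burau_estimate}.
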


In \cite{homological_estimate}, Koberda shows the similar estimate by using Lawrence-Krammer-Bigelow representation. 

\begin{Th}\label{Th:LKB_estimate}(Koberda \cite{homological_estimate}) \emph{For a pseudo-Anosov braid $\mc$, the $m$-th power of the dilatation of $\mc$ is equal to or greater than the spectral radius of the Lawrence-Krammer-Bigelow matrix $\mathit{LKB}_m(q,t)$ of $\mc$ after substituting complex numbers of modulus 1 in place of $q$ and $t$. }
\end{Th}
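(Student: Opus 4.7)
The plan is to derive the Lawrence--Krammer--Bigelow estimate directly from the main theorem, exploiting the fact noted in the introduction that $\mathit{LKB}_m$ is equivalent to the representation induced from $\rep$ by the homomorphism $\LKB\colon\IntertwiningBraid\to\mathbb{Z}\oplus\mathbb{Z}$. Fix $q_0,t_0\in\mathbb{C}$ of modulus $1$, let $\epsilon\colon\mathbb{Z}\oplus\mathbb{Z}\to\mathbb{C}^\times$ send its two generators to $q_0$ and $t_0$, and set $\chi=\epsilon\circ\LKB$, extended $\mathbb{Z}$-linearly to $\mathbb{Z}[\IntertwiningBraid]$. Applying $\chi$ entrywise to $\rep(\mc)$ produces a complex matrix whose spectral radius, by the equivalence of representations, equals the spectral radius $\rho$ of $\mathit{LKB}_m(q_0,t_0)(\mc)$ that we wish to bound.

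The single key estimate is that a unitary character dominates the group-ring norm: for any $x=\sum_s k_s s\in\mathbb{Z}[\IntertwiningBraid]$, the triangle inequality gives $|\chi(x)|\le\sum_s|k_s|=\norm{x}$, since $|\chi(s)|=1$. Applied entrywise, every entry of $\chi(\rep(\mc^k))$ is bounded in modulus by the corresponding entry of $\norm{\rep(\mc^k)}$. Combined with the elementary bound $\rho(M)\le\norm{M}_\infty\le\sum_{i,j}|M_{ij}|$ for the $\ell^\infty$-operator norm, and the identity $\rho^k=\rho(\chi(\rep(\mc))^k)=\rho(\chi(\rep(\mc^k)))$, this yields
\[
\rho^k\le\sum_{\mu,\nu\in\basis}\norm{\rep(\mc^k)_{\mu\nu}}.
\]

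To conclude, I would quote the proof of Proposition \ref{Prop:upper_bound_for_k}, which in fact establishes $\Growth{k}\norm{\IP{\homology{\Conflift{S}_\nu}}{\Conflift{\mc}_*^k(\homology{\Conflift{F}_\mu})}}\le\dil^m$ entrywise, for every pair $(\mu,\nu)\in\basis\times\basis$ and not only on the diagonal. Since $\basis$ is finite, the right-hand side of the displayed inequality has growth rate at most $\dil^m$, and taking $k$-th roots as $k\to\infty$ gives $\rho\le\dil^m$. The Burau estimate follows by the identical argument in the case $m=1$, with $\Burau\colon\IntertwiningBraidn{1}\to\mathbb{Z}$ in place of $\LKB$ and a single unit complex number substituted for $t$. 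The only subtlety, which I would flag as the main obstacle, is that the bare statement of the main theorem formally controls only $\tr\norm{\rep(\mc^k)}$ and hence the diagonal entries; one must reach into the proof of Proposition \ref{Prop:upper_bound_for_k} to extract the off-diagonal bound, though this is immediate from the intersection-pairing estimate used there.
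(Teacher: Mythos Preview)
Your argument is correct, but it follows a genuinely different route from the paper's. The paper recovers this estimate as Corollary~\ref{Cor:recover}, and its proof stays at the level of traces throughout: it first observes that the pushforward along $\LKB'$ can only decrease the norm, so $\norm{\tr(\LKB')_*(\rep)(\mc^k)}\le\norm{\tracek{k}}$, and then invokes the main theorem directly (diagonal entries only). The passage from the growth rate of $\norm{\tr A^k}$ to $\sup_{q,t\in S^1}\sr A(q,t)$ is handled by a separate Proposition~\ref{Prop:trace_estimate}, proved via an explicit Vandermonde-matrix argument (Lemma~\ref{Lem:polynomial}) bounding the $\ell^1$-norm of the coefficients of a Laurent polynomial by its supremum on the torus. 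Your approach bypasses Proposition~\ref{Prop:trace_estimate} entirely by specializing to fixed $q_0,t_0$ at the outset and using the elementary bound $\rho(M)\le\sum|M_{ij}|$; the price is that you need the full entrywise growth bound, not just the trace, and so must reach into the proof of Proposition~\ref{Prop:upper_bound_for_k} rather than quote the main theorem as stated. That entrywise bound is indeed established there for arbitrary $(\mu,\nu)$, so your argument goes through. In short: the paper trades an easy norm inequality for the analytic machinery of Proposition~\ref{Prop:trace_estimate}, while you trade that machinery for a mild strengthening of the main theorem that is already implicit in its proof.
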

\subsection{Homological estimation and Theorem \ref{Th:main_theorem}}\label{sec:recover}
In this section, we recover the estimation in \cite{Fri86}, \cite{Kol89} and \cite{homological_estimate} using Theorem \ref{Th:main_theorem}. If we have a homomorphism $\rho$ from $\IntertwiningBraid$ to some group $G$, we have an another representation $\rho_*(\rep)$ on the free $\mathbb{Z}[G]$-module defined by $\rho_*(\rep)=(\rho_*(c_{\mu\nu}^{(\mc)}))$. 
Moreover, if $G$ is a finitely generated free abelian group, $\mathbb{Z}[G]$ can be embedded in $\mathbb{C}$ and in this way, $\rho_*(\rep)$ gives rise to a linear representation $\rho_*^\prime(\rep)$ over $\mathbb{C}$. 

When $m=1$, Let $\Burau^\prime:\IntertwiningBraidn{1}\rightarrow\mathbb{Z}$ be a the homomorphism defined by $\Burau^\prime(\sigma_i)=0$ for all $1\leq i<n$ and $\Burau^\prime(\sigma_i^2)=1$. When $m\geq2$, let $\LKB^\prime:\IntertwiningBraid\rightarrow\mathbb{Z}\oplus\mathbb{Z}$ be a homomorphism defined by $\LKB^\prime(\sigma_i)=0\oplus0$ for all $1\leq i<n$, $\LKB^\prime(\sigma_n^2)=1\oplus0$ and $\LKB^\prime(\sigma_{n+j})=0\oplus1$. 
We consider the homomorphism from $\Aut_{\IntertwiningR}\left(\IntertwiningR\otimes_{\BraidR}\mathcal{H}_F\right)$ induced by $\LKB^\prime$. Since $\LKB^\prime(\sigma_i)$ is $0\oplus0$ for all $1\leq i<n$, the action as the right multiplication becomes trivial and $(\LKB^\prime)_*(\rep)$ is equivalent to the Lawrence-Krammer-Bigelow representations for all $m\geq2$. Similarly, $(\Burau^\prime)_*(\rep)$ is equivalent to the reduced Burau representation. 

For any matrix $A$ with coefficients in $n$-variable Laurent polynomial ring and complex numbers $x_1,\dots,x_n$, we denote by $A(x_1,\dots,x_n)$ the matrix with coefficients in $\mathbb{C}$ substituting $x_i$ for $i$-th variable. For any matrix $A$ with coefficients in $\mathbb{C}$, we denote by $\sr A$ the spectral radius of $A$. We state the main result of this section. 

\begin{Prop}\label{Prop:trace_estimate} \emph{For any matrix $A$ with coefficients in the Laurent polynomial ring $\mathbb{Z}[x_1,\dots,x_n]$, we have }
\[
\Growth{k}\norm{\tr A^k}=\sup_{x_i\in S^1}\sr A(x_1,\dots,x_n). 
\]
\end{Prop}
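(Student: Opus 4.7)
The plan is to prove the two inequalities $\Growth{k}\norm{\tr A^k}\ge\sup_{\eta\in(S^1)^n}\sr A(\eta)$ and $\Growth{k}\norm{\tr A^k}\le\sup_{\eta\in(S^1)^n}\sr A(\eta)$ separately. The key observation tying them together is a quantitative duality between the coefficient-sum norm $\norm{\cdot}$ and the pointwise values on the torus, mediated by the fact that the Laurent polynomial $\tr A^k$ has monomial support growing only polynomially in $k$.

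For the lower bound, fix $\eta=(\eta_1,\dots,\eta_n)\in(S^1)^n$. Since evaluation $x_i\mapsto\eta_i$ sends every Laurent monomial to a complex number of modulus one, the triangle inequality gives $|p(\eta)|\le\norm{p}$ for any Laurent polynomial $p$; applied to $p_k:=\tr A^k$ this reads $|\tr A(\eta)^k|\le\norm{\tr A^k}$. Writing $\tr A(\eta)^k=\sum_i\lambda_i(\eta)^k$ in terms of the eigenvalues $\lambda_i(\eta)$ of $A(\eta)$, the standard identity
\[
\limsup_k\Bigl|\sum_i\lambda_i(\eta)^k\Bigr|^{1/k}=\max_i|\lambda_i(\eta)|=\sr A(\eta)
\]
(an immediate consequence of Kronecker's simultaneous approximation theorem applied to the arguments of the eigenvalues of maximal modulus) yields $\sr A(\eta)\le\Growth{k}\norm{\tr A^k}$. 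Taking the supremum over $\eta$ completes this direction.

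For the upper bound, set $R:=\sup_{\eta\in(S^1)^n}\sr A(\eta)$ and let $d$ be the size of $A$. Because the trace is symmetric in the eigenvalues,
\[
|\tr A(\eta)^k|\le d\cdot\sr(A(\eta))^k\le dR^k
\]
uniformly in $\eta\in(S^1)^n$. Since each entry of $A$ is a Laurent polynomial supported in a common finite box, the support of $p_k=\tr A^k$ lies in a box of side length $O(k)$, so the number of monomials of $p_k$ is bounded by some polynomial $P(k)$ in $k$. Recovering each coefficient by the Fourier formula
\[
c_\alpha=\int_{(S^1)^n}p_k(\eta)\,\eta^{-\alpha}\,d\mu(\eta),
\]
with $\mu$ the normalized Haar measure on the torus, gives $|c_\alpha|\le\|p_k\|_{L^\infty}\le dR^k$. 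Summing over the support yields $\norm{p_k}\le P(k)\,dR^k$, and extracting the $k$-th root with $\limsup$ produces $\Growth{k}\norm{p_k}\le R$.

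The only genuinely nontrivial ingredient is the equidistribution step in the lower bound, which rules out spurious cancellations among the maximal-modulus eigenvalues when computing $\limsup_k|\tr A(\eta)^k|^{1/k}$; everything else is bookkeeping. The upper bound, despite requiring three estimates (the pointwise trace bound, the polynomial monomial count, and the Fourier-coefficient bound), combines them in a routine manner, so no serious obstacle appears there.
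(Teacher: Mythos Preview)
Your proof is correct and reaches the same conclusion as the paper, but the two arguments differ in how they establish the comparison between $\norm{\tr A^k}$ and $\sup_{\eta\in(S^1)^n}|\tr A^k(\eta)|$. The paper proves the bound $\sum_I|a_I|\le (M+1)^n\sup_{\eta}|f(\eta)|$ for a polynomial of degree $M$ via an explicit Vandermonde-matrix inversion at the $(M+1)$-st roots of unity, together with a separate lemma evaluating $\prod_{k=1}^{M}2\sin(k\pi/(M+1))=M+1$; you instead read off $|c_\alpha|\le\|p_k\|_{L^\infty}$ directly from the Fourier integral and multiply by the number of monomials in the support. Your route is shorter and more conceptual, while the paper's route produces an explicit constant $(k(M-m)+1)^n$ without appealing to integration. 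On the lower-bound side you are in fact more careful than the paper: you isolate the step $\limsup_k\bigl|\sum_i\lambda_i(\eta)^k\bigr|^{1/k}=\sr A(\eta)$ and justify it via simultaneous Diophantine approximation, whereas the paper simply writes $\tr A^k(\eta)=\sum_i\lambda_i(\eta)^k$ and asserts the resulting growth-rate identity. Either way, the polynomial growth of the monomial support of $\tr A^k$ is what makes the two norms asymptotically equivalent, and both proofs exploit this in the same way.
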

Let $I=(i_1,\dots,i_n)$ be a multi index and $x^I=\prod_{k=0}^nx_k^{i_k}$. 

\begin{Lem}\label{Lem:polynomial} \emph{We suppose $f(x_1,\dots,x_n)=\sum_{i_1=0}^M\dots\sum_{i_n=0}^Ma_Ix^I$ is an $n$-variable polynomial of degree $M$. Then we have the inequality }
\[
\sum_I|a_I|\leq(M+1)^n\sup_{x_k\in S^1}|f(x_1,\dots,x_n)|
\]
\end{Lem}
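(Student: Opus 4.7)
The plan is to extract each coefficient $a_I$ via a Fourier-type integral on the torus $(S^1)^n$ and then bound it coefficient-by-coefficient by the sup norm. The key input is the orthogonality relation
\[
\frac{1}{2\pi}\int_0^{2\pi} e^{ik\theta}\,d\theta = \delta_{k,0}
\]
for any integer $k$, applied coordinate-wise on the $n$-torus.

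First I would multiply $f(e^{i\theta_1},\dots,e^{i\theta_n})$ by $e^{-i(i_1\theta_1+\dots+i_n\theta_n)}$ and integrate over $[0,2\pi]^n$. Expanding $f$ in the given monomial basis and applying orthogonality in each variable isolates the single coefficient $a_I$, yielding
\[
a_I = \frac{1}{(2\pi)^n}\int_{[0,2\pi]^n} f(e^{i\theta_1},\dots,e^{i\theta_n})\, e^{-i(i_1\theta_1+\dots+i_n\theta_n)}\,d\theta_1\cdots d\theta_n
\]
for each multi-index $I=(i_1,\dots,i_n)$ with $0\leq i_k\leq M$. Since $|e^{-i(\cdots)}|=1$, pulling absolute values inside the integral gives the pointwise bound $|a_I|\leq \sup_{x_k\in S^1}|f(x_1,\dots,x_n)|$.

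Finally I would sum this estimate over all multi-indices $I$ in the stated range. Since each $i_k$ runs over $\{0,1,\dots,M\}$, there are exactly $(M+1)^n$ terms, producing
\[
\sum_I |a_I| \leq (M+1)^n \sup_{x_k\in S^1}|f(x_1,\dots,x_n)|,
\]
as required. There is no substantive obstacle here; the argument is just the standard Fourier coefficient extraction on the compact abelian group $(S^1)^n$. The only thing to keep straight is that the hypothesis ``degree $M$'' is used in the weak sense that each variable appears with exponent at most $M$, so that the sum $\sum_{i_1=0}^M\cdots\sum_{i_n=0}^M$ genuinely has $(M+1)^n$ entries.
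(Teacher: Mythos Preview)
Your argument is correct and considerably shorter than the paper's. Both proofs establish the same per-coefficient bound $|a_I|\le \sup_{x_k\in S^1}|f|$ and then sum over the $(M+1)^n$ multi-indices, but they reach that bound by different routes. You extract $a_I$ as a continuous Fourier coefficient on the torus $(S^1)^n$ and bound it by pulling the absolute value inside the integral. The paper instead treats the one-variable case by evaluating $f$ at the $(M+1)$-th roots of unity, inverting the resulting Vandermonde system explicitly, and then invoking a separate trigonometric identity (their Lemma~\ref{Lem:sine}, $\prod_{k=1}^M 2\sin(k\pi/(M+1))=M+1$) to show that every entry of the inverse Vandermonde matrix has modulus exactly $1/(M+1)$; the multivariable case is then obtained by iterating in one variable at a time. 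Your approach bypasses both the Vandermonde algebra and the auxiliary sine-product lemma entirely, at the cost of using an integral rather than a finite linear-algebra identity; the paper's version has the minor advantage of showing that the supremum over $S^1$ can be replaced by a maximum over finitely many roots of unity, though this refinement is not used downstream.
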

\begin{proof}
First of all, we prove the case $n=1$. Then $f(x)$ is a polynomial $\sum_{i=0}^Ma_ix^i$ of degree $M$. We consider the Vandermonde matrix 
\[
V=V_{M+1}(x_0,\dots,x_M)=\left(
\begin{array}{cccc}
1 & x_0 & \cdots & x_0^M \\
1 & x_1 & \cdots & x_1^M \\
\vdots & \vdots & \ddots & \vdots \\
1 & x_M & \cdots & x_M^M \\
\end{array}
\right). 
\]
Then we have $V{\bf a}={\bf A}$, where 
\[
{\bf a}=\left(
\begin{array}{c}
a_0 \\
\vdots \\
a_M
\end{array}
\right)\mathrm{\:and\:}{\bf A}=\left(
\begin{array}{c}
f(x_0) \\
\vdots \\
f(x_M)
\end{array}
\right). 
\]
We denote by $\sigma_m$ the $m$-th elementary symmetric function in the $(M+1)$ variables $x_0,\dots,x_M$. In other words, we have 
\[
\sigma_m=\sigma_m(x_0,\dots,x_M)=\sum_{\nu\in\mathcal{S}_m} x_{\nu(1)}\dots x_{\nu(m)}
\]
for all $1\leq m\leq M+1$ and $\sigma_0=1$. We use the notation $\sigma_m^i$ to denote the $m$-th elementary symmetric function in the $M$ variables $x_k$ with $x_i$ missing. In other words, we have 
\[
\sigma_m^i=\sigma_m(x_0,\dots,x_{i-1},x_{i+1},\dots,x_M). 
\]
We set $V^{-1}=(v_{ij})_{0\leq i,j\leq M}$. It is well known (see \cite{Vandermonde}) that we have 
\[
v_{ij}=(-1)^i\frac{\sigma_{M-i}^j}{\prod_{k\neq j}(x_k-x_j)} 
\]
We put $\theta=\pi\slash M+1$ and $x_k=\exp(2\sqrt{-1}k\theta)$. 
Since $x_i$'s are all the roots of $z^{M+1}-1=0$, we obtain $\sigma_m(x_0,\dots,x_M)=0$ for all $1\leq m\leq M$. Since the recursion formula $\sigma_{m+1}^i=\sigma_{m+1}-x_i\sigma_m^i$ holds, we obtain $\sigma_{m+1}^i=-x_i\sigma_m^i$ and $\sigma_m^i=(-x_i)^m$. 
Then we obtain 
\[
|v_{ij}|=\left|(-1)^{i-1}\frac{\sigma_{M-i}^j}{\prod_{k\neq j}(x_k-x_j)}\right|=\frac{1}{\prod_{k=1}^M(2\sin k\theta)}. 
\]
Since we have ${\bf a}=V^{-1}{\bf A}$, we have the inequality 
\[
|a_i|\leq\frac{M+1}{\prod_{k=1}^M(2\sin k\theta)}\max_k\left|f(x_k)\right|\leq\frac{M+1}{\prod_{k=1}^M(2\sin k\theta)}\sup_{x\in S^1}\left|f(x)\right|. 
\]
\begin{Lem}\label{Lem:sine} \emph{The equality $\prod_{k=1}^M(2\sin k\theta)=M+1$ holds. }
\end{Lem}
\begin{proof}
We set 
\[
\cos(2n-1)\theta=\cos\theta f_n(\cos\theta),\:\sin2n\theta=\sin2\theta g_n(\cos\theta)
\]
for $n\geq1$. Since 
\[
\left\{
\begin{array}{l}
\cos(2n+3)\theta+\cos(2n-1)\theta=2\cos2\theta\cos(2n+1)\theta \\
\sin2(n+2)\theta+\sin2n\theta=2\cos2\theta\sin2(n+1)\theta, 
\end{array}\right.
\]
hold, we obtain recursion formulae $f_{n+2}(x)=2(2x^2-1)f_{n+1}(x)-f_n(x)$ and $g_{n+2}(x)=2(2x^2-1)g_{n+1}(x)-g_n(x)$. Moreover, because of the initial conditions $f_1(x)=1$, $f_2(x)=4x^2-3$, $g_1(x)=1$ and $g_2(x)=4x^2-2$, $f_n(x)$ and $g_n(x)$ are polynomials of degree $2(n-1)$. Solving the recursion formulae of leading coefficient and constant term, we find that the leading coefficients of $f_n(x)$ and $g_n(x)$ is $4^n$, the constant term of $f_n(x)$ is $(2n-1)(-1)^{n-1}$ and the constant term of $g_n(x)$ is $n(-1)^{n-1}$. 

There exist distinct $2(n-1)$ solutions 
\[
\pm\sin(k\pi\slash(2n-1))=\cos(\pi/2\pm k\pi\slash(2n-1))\:\:k=1,\dots,n-1
\]
of $f_n(x)=0$ and distinct $2(n-1)$ solutions 
\[
\pm\sin(k\pi\slash2n)=\cos(\pi/2\pm k\pi\slash2n)\:\:k=1,\dots,n-1
\]
of $g_n(x)=0$. Vieta's formula implies $\prod_{k=1}^M(2\sin k\theta)=M+1$. 
\end{proof}
Lemma \ref{Lem:sine} implies $\sum_{i=0}^M|a_i|\leq(M+1)\sup_{x\in S^1}|f(x)|$. 

Now we consider the general case. For any $n$-variable polynomial 
\[
f(x_1,\dots,x_n)=\sum_{i_1=0}^M\dots\sum_{i_n=0}^Ma_Ix^I 
\]
of degree $M$, we set 
\[
f(x_1,\dots,x_n)=\sum_{i_n=0}^Mf_{i_n}(x_1,\dots,x_{n-1})x_n^{i_n}. 
\]
Then we obtain 
\[
\sup_{x_1,\dots,x_{n-1}\in S^1}\sum_i|f_i(x_1,\dots,x_{n-1})|\leq(M+1)\sup_{x_1,\dots,x_n\in S^1}|f(x_1,\dots,x_n)|. 
\]
Repeating this $n$ times shows the inequality 
\[
\sum_I|a_I|\leq(M+1)^n\sup_{x_1,\dots,x_n\in S^1}|f(x_1,\dots,x_n)|. 
\]
\end{proof}

\noindent
{\it Proof of Proposition \ref{Prop:trace_estimate}. }
We notice that 
\[
\sup_{x_i\in S^1}\left|\sum_{i_1=m}^M\dots\sum_{i_n=m}^Ma_Ix^I\right|=\sup_{x_i\in S^1}\left|\sum_{i_1=0}^{M-m}\dots\sum_{i_n=0}^{M-m}a_Ix^I\right| 
\]
holds. We denote by $A$ a matrix 
with coefficients in $n$-variable Laurent polynomial ring. Let $M$ and $m$ be the maximum and minimum degree of all entries of $A$. Then the maximum degree of all entries of $A^k$ is equal to or less than $kM$ and the minimum degree of all entries of $A^k$ is equal to or greater than $km$. Using Lemma \ref{Lem:polynomial}, we obtain 
\[
\sup_{x_i\in S^1}|\tr A^k(x_1,\dots,x_n)|\leq\norm{\tr A^k}\leq(k(M-m)+1)^n\sup_{x_i\in S^1}|\tr A^k(x_1,\dots,x_n)|. 
\]
Therefore we obtain 
\[
\Growth{k}\norm{\tr A^k}=\Growth{k}\sup_{x_i\in S^1}|\tr A^k(x_1,\dots,x_n)|. 
\]
Cayley-Hamilton theorem shows
\[
\tr A^k(x_1,\dots,x_n)=\lambda_1^k+\dots+\lambda_N^k, 
\]
where $\lambda_1,\dots,\lambda_N$ are the eigenvalues of $A(x_1,\dots,x_n)$. Therefore we obtain 
\[
\Growth{k}\sup_{x_i\in S^1}|\tr A^k(x_1,\dots,x_n)|=\sup_{x_i\in S^1}\sr A(x_1,\dots,x_n). 
\]
$\Box$

Using Proposition \ref{Prop:trace_estimate}, we recover the estimation in \cite{Fri86}, \cite{Kol89} and \cite{homological_estimate}. 

\begin{Cor}\label{Cor:recover} \emph{For a pseudo-Anosov braid $\mc$, the dilatation of $\mc$ is equal to or greater than the spectral radius of the Burau matrix $B(t)$ of $\mc$ after substituting a complex number of modulus 1 in place of $t$ and the $m$-th power of the dilatation of $\mc$ is equal to or greater than the spectral radius of the Lawrence-Krammer-Bigelow matrix $\mathit{LKB}_m(q,t)$ of $\mc$ after substituting complex numbers of modulus 1 in place of $q$ and $t$. }
\end{Cor}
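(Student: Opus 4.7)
\vspace{1ex}

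\noindent
\textbf{Proof proposal.} The plan is to transport the identity $\Growth{k}\norm{\tracek{k}}=\dil^m$ of Theorem \ref{Th:main_theorem} along the homomorphisms $\Burau^\prime$ (when $m=1$) and $\LKB^\prime$ (when $m\geq 2$), and then translate the resulting growth-rate bound into a spectral-radius bound via Proposition \ref{Prop:trace_estimate}. Concretely, I will only write the argument for $\LKB^\prime$; the Burau case is identical with $\mathbb{Z}\oplus\mathbb{Z}$ replaced by $\mathbb{Z}$.

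First I would observe that $\torusfund$ is a subgroup of $\IntertwiningBraid$, because $\Braid\subset \IntertwiningBraid$ by construction and $\mc\in B_n$ is a word in $\sigma_1,\dots,\sigma_{n-1}$. Hence $\LKB^\prime$ restricts to a group homomorphism $\torusfund\to\mathbb{Z}\oplus\mathbb{Z}$; since the target is abelian it descends to $(\LKB^\prime)_\ast:\mathbb{Z}(\torusfund)_c\to\mathbb{Z}[\mathbb{Z}\oplus\mathbb{Z}]$. A one-line check gives the norm estimate
\[
\bigl\|(\LKB^\prime)_\ast(x)\bigr\| \;\leq\; \|x\|\qquad\text{for all } x\in\mathbb{Z}(\torusfund)_c,
\]
because applying a homomorphism can at worst produce cancellation between coefficients. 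As the paper notes in Section~\ref{sec:recover}, the representation $(\LKB^\prime)_\ast(\rep)$ is equivalent to $\mathit{LKB}_m$; identifying matrices through this equivalence and using commutativity of $\mathbb{Z}[\mathbb{Z}\oplus\mathbb{Z}]$ (so that the classical trace coincides with the $\mathbb{Z}[\mathbb{Z}\oplus\mathbb{Z}]$-trace), one obtains
\[
(\LKB^\prime)_\ast\!\bigl(\tracek{k}\bigr) \;=\; \tr\,\mathit{LKB}_m(\mc^k).
\]

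Combining the previous two displays with Theorem \ref{Th:main_theorem} yields
\[
\Growth{k}\,\bigl\|\tr\,\mathit{LKB}_m(\mc^k)\bigr\| \;\leq\; \Growth{k}\,\|\tracek{k}\| \;=\; \dil^m.
\]
Proposition \ref{Prop:trace_estimate}, applied to the matrix $A=\mathit{LKB}_m(\mc)$ over $\mathbb{Z}[q^{\pm1},t^{\pm1}]$, rewrites the left-hand side as $\sup_{q,t\in S^1}\sr\,\mathit{LKB}_m(q,t)$. Consequently $\sr\,\mathit{LKB}_m(q_0,t_0)\leq\dil^m$ for every $(q_0,t_0)\in S^1\times S^1$. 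The case $m=1$ with $\Burau^\prime$ in place of $\LKB^\prime$ gives the Burau inequality $\sr\,B(t_0)\leq\dil$ by exactly the same argument.

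The conceptual step I expect to be the most delicate is the compatibility between the $\torusfund$-trace used in Theorem \ref{Th:main_theorem} and the ordinary matrix trace of $\mathit{LKB}_m$ under the homomorphism $\LKB^\prime$: one must verify both that $\torusfund$ sits inside $\IntertwiningBraid$ in the correct way so that $\LKB^\prime$ acts on the entries of $\rep(\mc^k)$ and on the trace coherently, and that the equivalence of representations recorded in Section \ref{sec:recover} intertwines the two traces. Once this bookkeeping is in place, the remainder is the short norm estimate and a direct appeal to Proposition \ref{Prop:trace_estimate}.
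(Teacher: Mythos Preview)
Your proposal is correct and follows essentially the same route as the paper: push the growth-rate identity of Theorem~\ref{Th:main_theorem} through the homomorphism $\LKB^\prime$ (respectively $\Burau^\prime$) using the obvious norm inequality $\norm{(\LKB^\prime)_\ast(x)}\leq\norm{x}$, and then invoke Proposition~\ref{Prop:trace_estimate} to convert the resulting growth bound into a spectral-radius bound. The paper records the same two steps in one line each; you have spelled out the bookkeeping (that $\torusfund\subset\IntertwiningBraid$, that the trace in $\mathbb{Z}(\torusfund)_c$ maps to the ordinary trace of the induced matrix over the commutative ring $\mathbb{Z}[q^{\pm1},t^{\pm1}]$) which the paper leaves implicit, and you correctly flag the trace-compatibility check as the only point requiring care.
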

\begin{proof}
Since $\norm{\tr(\rho)_*(\rep)(\mc^k)}$ is equal to or less than $\norm{\tracek{k}}$, we obtain 
\[
\Growth{k}\norm{\tr(\Burau^\prime)_*(\zeta_{n,1})(\mc^k)}\leq\dil
\]
and 
\[
\Growth{k}\norm{\tr(\LKB^\prime)_*(\rep)(\mc^k)}\leq\dil^m. 
\]
From Proposition \ref{Prop:trace_estimate}, we obtain 
\[
\Growth{k}\norm{\tr(\Burau^\prime)_*(\zeta_{n,1})(\mc^k)}=\sup_{t\in S^1}B(t) 
\]
and 
\[
\Growth{k}\norm{\tr(\LKB^\prime)_*(\rep)(\mc^k)}=\sup_{q,t\in S^1}\mathit{LKB}_m(q,t). 
\]
Therefore we obtain 
\[
\sup_{t\in S^1}B(t)\leq\dil\mathrm{\:\:\:and\:}\sup_{q,t\in S^1}\mathit{LKB}_m(q,t)\leq\dil^m. 
\]
\end{proof}

On the other hand, it is not known whether $\Growth{m}\norm{\tr(\LKB)_*(\rep)(\beta)}$ is $\dil$ or not. If $\Growth{m}\norm{\tr(\LKB)_*(\rep)(\beta)}$ is not necessarily $\dil$, there exists some sufficient condition for $\Growth{m}\norm{\tr(\LKB)_*(\rep)(\beta)}=\dil$. Clearly the condition in Theorem \ref{Th:Burau_estimate} is a sufficient condition for the above equality. We want to reveal whether this sufficient condition is the best condition or not. 

\section*{Acknowledgement}

I would like to show my greatest appreciation to Professor Toshitake Kohno whose comments and suggestions were of inestimable value for my study. Special thanks also go to the member of the same seninar whose opinions and information have helped me very much throughout the production of this study. I would also like to express my gratitude to my family for their moral support and warm encouragements.


\end{document}